\newtheorem{thm}{Theorem}[section]
\newtheorem{lm}[thm]{Lemma}
\newtheorem{pr}[thm]{Proposition}
\theoremstyle{definition}
\newtheorem{df}[thm]{Definition}
\newtheorem{exm}[thm]{Example}
\newtheorem{rem}[thm]{Remark}
\numberwithin{equation}{section}
\title{Length functions exponentially distorted on subgroups of complex Lie groups}
\author{O. Yu. Aristov}
\email{aristovoyu@inbox.ru}
\address {Institute for Advanced Study in Mathematics, Harbin Institute of Technology,  Harbin 150001, China}
\keywords{Length function, exponential distortion, complex Lie group, nilpotent radical, exponential radical, Banach PI-algebra, geometric group theory}
\subjclass[2020]{20F65, 22E30, Secondary 16R99}
\begin{document}

\begin{abstract}
We introduce a notion of a length function exponentially distorted  on a (compactly generated) subgroup of a locally compact group. We prove that for a connected linear complex Lie group there is a maximum equivalence class of length functions exponentially distorted on a normal integral subgroup lying between the exponential and nilpotent radicals. Moreover, a function in this class admits an asymptotic decomposition similar to that previously found by the author for word length functions, i.e., in the case of exponential radical [J. Lie Theory 29:4, 1045--1070, 2019]. In the general case we use auxiliary length functions constructed via holomorphic homomorphisms to Banach PI-algebras.
\end{abstract}

 \maketitle
\markright{Length functions}

The aim of this paper is to find  asymptotic decompositions (up to quasi-isometry) of some length functions (i.e., functions satisfying $\ell(gh)\le \ell(g)+\ell(h)$ for all $g,h$) on a connected linear complex Lie group. The study of such decompositions is motivated by the fact that they provide information on the structure of the Fr\'echet-Arens-Michael algebra associated with a length function; see~\cite[\S\,4]{AHHFG} and the follow-up paper~\cite{ArPC15}. It was  Akbarov who noted that word length functions (which are known to be equivalent to those associated with a left invariant Riemannian metric; see, e.g., \cite[Proposition 5.5]{ArAMN}) can be used for describing the structure of the Arens-Michael envelope of the algebra of analytic functionals on the group \cite[\S\,5.2]{Ak08}. (In fact, he used the exponentials of length functions, which are called semicharacters or, in other terminology, submultiplicative weights.)  An asymptotic decomposition of a word length function is obtained in~\cite{ArAnF}. Here we want to prove more.

In addition to word length functions, a Lie group can admit many others. But which ones are interesting in some way? In this article we present  a lot of `interesting' length functions (for a 'sufficiently large' Lie group), which are the solutions of certain extremal problems. Specifically, we introduce a notion of length function \emph{exponentially distorted} on a given subgroup and prove that for every normal integral subgroup lying between the exponential and nilpotent radicals there is a maximal (up to equivalence) length function of this type. This is the first assertion in our main result, Theorem~\ref{maindisto}. The second gives an asymptotic decomposition of such a length function. The proof is based on a technique developed in~\cite{ArAnF} and the result itself is a generalization of a theorem in those paper.

The idea to consider  exponential distortion on subgroups of a complex Lie group comes for the following observation. It is proved  in \cite{ArPiLie}  that the range of a Lie-algebra homomorphism to a Banach algebra satisfies a polynomial identity if and only if a certain polynomial growth condition holds on the image of the nilpotent radical of the Lie algebra (see Theorem~\ref{forHom} below). In the Lie-group context (Theorem~\ref{exdiNR}) this condition means that the length function associated with a holomorphic homomorphism to a Banach algebra has exponential distortion on the nilpotent radical, now of the Lie group.  On the other hand,  it is known that a word length function has exponential distortion on the so-called exponential radical (which is smaller) \cite{Co08}. Thus it is natural to look on subgroups intermediate between two radicals. And we do it.

Here we consider only complex Lie groups.
There is no doubt that a similar theory can be developed for real Lie groups (see some hints in Remark~\ref{nonscreal}). But our goal is applications to completions of algebras of analytic functionals in the complex case. So we confine ourselves to this case.

Note also that the length functions constructed in this article corresponds to a family of equivalence classes (up to quasi-isometry) of metric spaces parameterized by integral subgroups under consideration (Remark~\ref{nqifam}). It seems likely that these metric spaces is of interest from the point of view of geometric groups theory.

This paper is the third in a series of five dealing with properties of completions of algebras of analytic functionals on connected complex Lie groups. The structure of the series is as follows.  The completions themselves are studied in the fourth and fifth papers: the theorem on weight decomposition proved here is used in \cite{ArPC15} to show that the corresponding completion can be decomposed into an iterated analytic smash product, and in \cite{ArFin} smash products are used in proving that the completion homomorphism is a homological epimorphism. On the other hand, this article are essentially based  on \cite{ArPiLie} (the second article). The results in the first article \cite{ArLine} are needed only for \cite{ArFin}.

The main text of this paper is divided into two sections. In \S\,\ref{sec:exdi} we collect some basic facts on radicals of a Lie group and length functions, introduce  a definition of exponential distortion, consider some examples, and describe a construction of a length function exponentially distorted on a subgroup contained in the nilpotent radical (Proposition~\ref{omegpr0}). Banach PI-algebras play an important role in this consideration. Our main result, Theorem~\ref{maindisto}, are stated and proved in~\S\,\ref{sec:decth}. We show the existence of a maximal  exponentially distorted length function and derive a formula for an asymptotic decomposition. It is also proved there that the equivalence class of such a function is uniquely determined by the corresponding subgroup (Proposition~\ref{N1N2}).

\section{Exponentially distorted length functions}
\label{sec:exdi}

In this section we have compiled preliminary information on the nilpotent radical and the exponential radical. We also define exponential distortion and give a construction of a length function exponentially distorted on an integral subgroup contained in the nilpotent radical.

\subsection*{Nilpotent radical}

In this article we consider finite-dimensional complex Lie algebras and Lie groups.

Recall that the \emph{nilpotent radical} $\mathfrak{n}$ of a Lie algebra $\mathfrak{g}$ is the intersection of the kernels of all finite-dimensional irreducible representations of~$\mathfrak{g}$ (\cite[Chapter~I, \S\,5.3, p.\,44, Definition~3]{Bou} or \cite[p.\,27, \S\,1.7.2]{Dix}). Similarly, the \emph{nilpotent radical} $N$  of a complex Lie group~$G$ is the intersection of the kernels of all finite-dimensional irreducible holomorphic representations of~$G$. We follow the terminology in \cite{Dix,OV3};  in \cite{Le02} the term `representation radical' is used. It is obvious that $N$ is a closed normal subgroup of~$G$.

Let $\mathfrak{g}$ be the Lie algebra of~$G$. Recall that the subgroup of $G$ generated by  $\exp\mathfrak{h}$, where $\mathfrak{h}$ is a Lie subalgebra of $\mathfrak{g}$ is said to be \emph{integral}. The \emph{solvable radical} of a connected Lie group is, by definition, the integral subgroup corresponding to the solvable radical of~$\mathfrak{g}$. Also, if $H_1$ and $H_2$ are subgroups of $G$, then we denote by $(H_1,H_2)$ the subgroup generated by the multiplicative commutators $h_1 h_2 h_1^{-1}h_2^{-1}$, where $h_1\in H_1$ and $h_2\in H_2$. Recall that a complex Lie group is said to be \emph{linear} if it admits a faithful holomorphic representation.

The following proposition is more or less standard.

\begin{pr}\label{NRprop}
If a complex Lie group~$G$ is connected and linear, then the nilpotent radical $N$ is a nilpotent, simply connected, integral subgroup with the nilpotent radical $\mathfrak{n}$ as its Lie algebra. Also, $N$ coincides with $(G,R)$ and the solvable radical of $(G,G)$, where $R$ is the solvable radical of $G$.
\end{pr}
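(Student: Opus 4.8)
The plan is to reduce the group statements to the corresponding facts about $\mathfrak g$ together with the hypothesis of linearity. First I would recall, citing \cite{Bou} and \cite{Dix}, that the nilpotent radical $\mathfrak n$ of $\mathfrak g$ is a nilpotent ideal, that $\mathfrak n=[\mathfrak g,\mathfrak r]$, where $\mathfrak r$ is the solvable radical of $\mathfrak g$, that $[\mathfrak g,\mathfrak g]\cap\mathfrak r=\mathfrak n$ (so $\mathfrak n$ is also the solvable radical of the ideal $[\mathfrak g,\mathfrak g]$), and that $\mathfrak g/\mathfrak n$ is reductive with centre $\mathfrak r/\mathfrak n=\mathfrak g/[\mathfrak g,\mathfrak g]$. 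Using the standard fact that for a connected Lie group $G$ and a connected normal subgroup $H$ the commutator subgroup $(G,H)$ is a connected (hence integral) normal subgroup with Lie algebra $[\mathfrak g,\mathfrak h]$, it follows immediately that $(G,R)$ and the solvable radical of $(G,G)$ are both equal to the integral subgroup $\widehat N$ of $G$ with Lie algebra $\mathfrak n$. Thus it remains to prove that $\widehat N$ is nilpotent, simply connected and closed, and that $\widehat N$ coincides with the nilpotent radical $N$.

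For the topological statements I would exploit linearity. Fix a faithful holomorphic representation $\rho\colon G\to GL(V)$. Since, $G$ being connected, the irreducible subquotients of $V$ under $\rho$ correspond to irreducible representations of $\mathfrak g$, each of which kills $\mathfrak n$ by definition, the operators in $\rho(\mathfrak n)$ are simultaneously strictly triangular with respect to the associated flag. Hence $\rho(\mathfrak n)$ is a nilpotent Lie algebra of nilpotent operators, and $\exp$ maps it isomorphically onto a closed, simply connected, unipotent subgroup $U\subseteq GL(V)$. As $d\rho$ is injective, $\rho^{-1}(U)$ is a closed subgroup of $G$ with Lie algebra $\mathfrak n$, so $\widehat N$, being its identity component, is closed in $G$; and $\rho|_{\widehat N}\colon\widehat N\to U$ is a bijective immersion between Lie groups of equal dimension, with $U$ simply connected, hence an isomorphism. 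Therefore $\widehat N$ is closed, simply connected and nilpotent.

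Next I would prove $N=\widehat N$. The inclusion $\widehat N\subseteq N$ is easy: if $\pi$ is an irreducible holomorphic representation of $G$, then $d\pi$ is an irreducible representation of $\mathfrak g$ (again by connectedness of $G$), so $\mathfrak n\subseteq\ker d\pi$ and thus $\exp\mathfrak n\subseteq\ker\pi$; intersecting over all $\pi$ gives $\widehat N=\langle\exp\mathfrak n\rangle\subseteq N$. In particular every irreducible holomorphic representation of $G$ factors through $Q:=G/\widehat N$, which is a connected complex Lie group with reductive Lie algebra $\mathfrak g/\mathfrak n$, and $N/\widehat N$ is exactly the nilpotent radical of $Q$. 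So it suffices to show that $Q$ has enough irreducible holomorphic representations, i.e. that its nilpotent radical is trivial. Here I would invoke the structure theory of connected linear complex Lie groups (\cite{Le02}, \cite{OV3}): a faithful representation of $G$ produces, after semisimplification, a completely reducible holomorphic representation with kernel between $\widehat N$ and $N$, and together with the holomorphic characters of $G$ coming from $\mathfrak g/[\mathfrak g,\mathfrak g]=\mathfrak r/\mathfrak n$ these furnish a faithful completely reducible holomorphic representation of $Q$; for such a group the intersection of the kernels of all irreducible holomorphic representations is trivial, whence $N=\widehat N$.

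I expect the main obstacle to be precisely this last point: verifying that $Q=G/\widehat N$ is again linear — more precisely, reductive in the sense of admitting a faithful completely reducible holomorphic representation — since quotients of linear complex Lie groups need not be linear in general. The saving feature is that $\widehat N$ lies inside the kernel of the semisimplification of every holomorphic representation of $G$, so passing to $G/\widehat N$ does not spoil faithfulness of the ``semisimple part'' of a faithful representation of $G$; once $Q$ is known to be reductive, the remaining verifications are routine bookkeeping with the cited structural results and the Lie-algebra identities recalled above.
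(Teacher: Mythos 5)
Your reduction of the algebraic part to the Lie algebra level is sound and matches the paper's: the identities $\mathfrak n=[\mathfrak g,\mathfrak r]=[\mathfrak g,\mathfrak g]\cap\mathfrak r$ together with the fact that $(G,R)$ and the solvable radical of $(G,G)$ are the integral subgroups for $[\mathfrak g,\mathfrak r]$ and for the solvable radical of $[\mathfrak g,\mathfrak g]$ identify both with the integral subgroup $\widehat N=\langle\exp\mathfrak n\rangle$. Your argument that $\widehat N$ is closed, simply connected and nilpotent --- pushing $\mathfrak n$ through a faithful holomorphic representation into a unipotent subgroup of ${\mathop{\mathrm{GL}}\nolimits}(V)$ --- is correct and in fact more self-contained than the paper, which simply quotes \cite{Le02} for these properties.

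The genuine gap is in the last step, the inclusion $N\subseteq\widehat N$. Your assertion that the semisimplification $\rho^{ss}$ of a faithful representation $\rho$ has ``kernel between $\widehat N$ and $N$'' is backwards: since the irreducible constituents of $\rho$ are among all irreducible holomorphic representations, one only gets $N\subseteq\ker\rho^{ss}$, and the kernel can be far larger than $N$. Already for $G=\mathbb{C}$ with $\rho(z)=\left(\begin{smallmatrix}1&z\\0&1\end{smallmatrix}\right)$ one has $\ker\rho^{ss}=G$ while $N=\{1\}$; there the holomorphic characters rescue the situation, but in general you must prove that $\ker\rho^{ss}\cap\bigcap_\chi\ker\chi\subseteq\widehat N$, i.e.\ handle the elements of $\ker\rho^{ss}$ lying in $\overline{(G,G)}$ that are not in $\widehat N$ (and also justify that the characters of $G$ separate the points of the abelianization, which is itself not automatic for a quotient of a linear group). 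This is exactly the nontrivial content of the structure theory you propose to ``invoke'' (\cite[Theorem 4.38 and Corollary 4.39]{Le02}, resting on the nucleus decomposition $G=B\rtimes L$ and Hochschild's theory of faithfully representable analytic groups); your ``saving feature'' paragraph restates the desired conclusion rather than proving it. The paper avoids the issue by citing these results of Lee outright, which is the honest way to close this step unless you reproduce that argument.
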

\begin{proof}
Note that $N$ is nilpotent and simply connected by \cite[Theorem 4.38 and Corollary 4.39]{Le02} (warning: the definition of a `complex analytic group' in \cite{Le02} includes the assumption of being connected). Moreover, $N$ coincides with the radical of $(G,G)$ by \cite[Corollary 4.39]{Le02}. In particular, $N$ is integral.

Note that $\mathfrak{n}=[\mathfrak{g},\mathfrak{r}]=[\mathfrak{g},\mathfrak{g}]\cap\mathfrak{r}$, where $\mathfrak{r}$ is the solvable radical of $\mathfrak{g}$ \cite[p.\,26, Proposition 1.7.1]{Dix}. It is easy to see that the solvable radical of $[\mathfrak{g},\mathfrak{g}]$ coincides with $[\mathfrak{g},\mathfrak{g}]\cap \mathfrak{r}$ and therefore with $\mathfrak{n}$. Thus, $\mathfrak{n}$ is the Lie algebra of~$N$.

Since $G$ and $R$ are the integral subgroups of $G$ corresponding to $\mathfrak{g}$ and $\mathfrak{r}$,  and $\mathfrak{n}$ is an ideal, it follows that $(G,R)$ is the integral subgroup corresponding to $[\mathfrak{g},\mathfrak{r}]$ \cite[pp.\,443--444, Lemma 11.2.2 and Proposition 11.2.3]{HiNe}. Then $[\mathfrak{g},\mathfrak{r}]=\mathfrak{n}$ implies that $(G,R)=N$.
\end{proof}

Thus, $N$ can  also be defined as the integral subgroup corresponding to the nilpotent radical of~$\mathfrak{g}$.

\begin{lm}\label{closc}
Let $G$  be a connected linear complex Lie group and $N$ the nilpotent radical of $G$. If $N'$ is an arbitrary integral subgroup of $N$, then it is nilpotent, closed and simply connected.
\end{lm}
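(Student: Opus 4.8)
The plan is to reduce the lemma to the structure of the nilpotent radical recorded in Proposition~\ref{NRprop} together with the classical rigidity of connected subgroups inside a simply connected nilpotent Lie group.

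First I would note that an integral subgroup of $G$ contained in $N$ has Lie algebra contained in $\mathfrak n=\operatorname{Lie}(N)$: if $\exp\mathfrak h\subseteq N$, then $\exp(zX)\in N$ for every $z\in\mathbb C$ and $X\in\mathfrak h$, so $\mathfrak h\subseteq\mathfrak n$. Hence $N'$ is exactly the connected complex Lie subgroup of $N$ corresponding to a complex Lie subalgebra $\mathfrak h$ of $\mathfrak n$. By Proposition~\ref{NRprop} the group $N$ is nilpotent, and a subgroup of a nilpotent group is nilpotent, which already gives the first assertion. Moreover, $N$ is simply connected, and it is closed in $G$ (being the intersection of the kernels of the irreducible holomorphic representations of $G$).

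It remains to show that $N'$ is closed in $N$ and simply connected. For this I would invoke the standard fact that in a connected simply connected nilpotent complex Lie group $M$ the exponential map is a biholomorphism and $\mathfrak k\mapsto\exp\mathfrak k$ is a bijection from the complex Lie subalgebras of $\operatorname{Lie}(M)$ onto the connected complex Lie subgroups of $M$, each of which is closed and simply connected; the complex case follows from the real one (see, e.g., \cite[Theorem~1.2.1]{CorwinGreenleaf}), since a complex subalgebra is in particular a real subalgebra and simple connectedness is a property of the underlying real group. Applying this to $M=N$ yields that $N'=\exp_N\mathfrak h$ is closed in $N$ and simply connected; since $N$ is closed in $G$, it follows that $N'$ is closed in $G$, which completes the argument.

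I expect the only non-formal point to be the appeal to the simply connected nilpotent case, so let me indicate how it can be made self-contained, by induction on $\dim_{\mathbb C}N$. If $\mathfrak h=\mathfrak n$ there is nothing to prove. Otherwise pick a maximal proper subalgebra $\mathfrak m$ of $\mathfrak n$ with $\mathfrak h\subseteq\mathfrak m$; since $\mathfrak n$ is nilpotent, $\mathfrak m$ is automatically an ideal of codimension one. As $N$ is simply connected, the quotient map $\mathfrak n\to\mathfrak n/\mathfrak m\cong\mathbb C$ integrates to a holomorphic homomorphism $\varphi\colon N\to\mathbb C$, which is surjective; the long exact homotopy sequence of the locally trivial bundle $\ker\varphi\to N\to\mathbb C$, together with $\pi_1(N)=\pi_1(\mathbb C)=\pi_2(\mathbb C)=0$, shows that $M:=\ker\varphi$ is connected and simply connected, and clearly $M$ is closed with Lie algebra $\mathfrak m$ and $\dim_{\mathbb C}M=\dim_{\mathbb C}N-1$. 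By the inductive hypothesis $N'=\langle\exp\mathfrak h\rangle$ is closed in $M$ and simply connected, and since $M$ is closed in $N$ and $N$ is closed in $G$, the group $N'$ is closed in $G$. Each ingredient here is entirely standard, so this is only an organizational, not a conceptual, obstacle.
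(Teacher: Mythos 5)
Your proof is correct, but it takes a genuinely different route from the paper's. The paper decomposes $G$ as a semidirect product $B\rtimes L$ with $B$ simply connected solvable ([HiNe, Theorem~16.3.7]), notes that $N\subset B$ ([Le02, Proposition~4.44], where $B$ is the ``nucleus''), and then cites the fact that every integral subgroup of a \emph{simply connected solvable} Lie group is closed and simply connected ([HiNe, Proposition~11.2.15]); nilpotency of $N'$ is observed at the end exactly as you do. You instead stay entirely inside $N$, using only what Proposition~\ref{NRprop} already provides ($N$ nilpotent, simply connected, closed) together with the classical rigidity of connected subgroups of a simply connected \emph{nilpotent} group. Your first reduction (that $\mathfrak h\subseteq\mathfrak n$), the passage from the real to the complex case, and the codimension-one induction are all sound: in a nilpotent Lie algebra a maximal proper subalgebra is indeed an ideal of codimension one (it equals its own normalizer only if improper), the quotient character integrates because $N$ is simply connected, and the homotopy exact sequence of the resulting bundle over $\mathbb C$ gives connectedness and simple connectedness of the kernel. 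What your argument buys is economy of external input -- it avoids the structure theorem $G=B\rtimes L$ and the reference to Lee's nucleus altogether, and is essentially self-contained; what the paper's argument buys is slightly greater generality (it applies verbatim to integral subgroups of $B$, not only of $N$) and brevity, since the solvable case is quoted as a single citation. Either proof is acceptable here.
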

\begin{proof}
Every connected linear complex group Lie can be represented as a semidirect product $B\rtimes L$, where $B$ is simply connected solvable  and $L$ is linearly complex reductive; see \cite[p.\,601, Theorem 16.3.7]{HiNe}.  Note that $N\subset B$ (see \cite[Proposition 4.44]{Le02}, where $B$ is called a `nucleus'). Therefore $N'\subset B$. Being an integral subgroup of a simply connected solvable Lie group,  $N'$ is closed and simply connected \cite[p.\,449, Proposition 11.2.15]{HiNe}. Finally, $N'$ is nilpotent because it is contained in  $N$.
\end{proof}

\subsection*{Exponential radical}

Let $E$ denote the \emph{exponential radical} of a connected complex Lie group $G$. (The terminology is from~\cite{Os02}; the definition for a real connected Lie group is given in~\cite[\S\,6]{Co08}.) For the complex case we
use the following characterization as a definition:
$E$ is the normal complex Lie subgroup such that $G/E$ is
the largest quotient of $G$ that is locally isomorphic to a direct
product of a nilpotent  and  semisimple complex Lie group \cite[Proposition 3.11]{ArAnF}.

Let
\begin{equation}\label{fedef}
\mathfrak{e}\!: = \mathfrak{r}_\infty + (\mathfrak{s},\mathfrak{r}),
\end{equation}
where $\mathfrak{r}$ is the solvable radical of $\mathfrak{g}$, $\mathfrak{r}_\infty$ is the intersection of its lower central series, $\mathfrak{s}$ is a Levi complement and $(\mathfrak{s},\mathfrak{r})$ is the Lie subalgebra generated by $[\mathfrak{s},\mathfrak{r}]$; see \cite[Formula~(4)]{ArAnF} and \cite[Remark 4.8]{AHHFG}.

\begin{pr}\label{exprpr}
If a complex Lie group~$G$ is connected and linear, then the exponential radical $E$ is a  nilpotent, simply connected, integral subgroup with $\mathfrak{e}$ as its Lie algebra. Also, $E\subset N$.
\end{pr}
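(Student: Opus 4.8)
The plan is to identify $E$ with the integral subgroup attached to $\mathfrak{e}$, check the Lie-algebra inclusion $\mathfrak{e}\subseteq\mathfrak{n}$, and then read off everything else from Lemma~\ref{closc}.

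First I would confirm that $\mathfrak{e}$ is the Lie algebra of $E$. This is close to being contained in \cite{ArAnF} (compare the discussion around~\eqref{fedef}), but it can also be derived directly from the Levi decomposition, i.e. the semidirect product $\mathfrak{g}=\mathfrak{s}\ltimes\mathfrak{r}$: since $[\mathfrak{s},\mathfrak{r}]\subseteq(\mathfrak{s},\mathfrak{r})\subseteq\mathfrak{e}$ and $\mathfrak{e}\subseteq\mathfrak{r}$ (the latter because $\mathfrak{r}$ is an ideal), the quotient $\mathfrak{g}/\mathfrak{e}$ is the direct product of the semisimple algebra $(\mathfrak{s}+\mathfrak{e})/\mathfrak{e}\cong\mathfrak{s}$ and the algebra $\mathfrak{r}/\mathfrak{e}$, which is nilpotent because $\mathfrak{r}_\infty\subseteq\mathfrak{e}$. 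Conversely, if $\mathfrak{h}$ is an ideal of $\mathfrak{g}$ and $\mathfrak{g}/\mathfrak{h}$ is a direct product of a nilpotent and a semisimple Lie algebra, then the solvable radical of $\mathfrak{g}/\mathfrak{h}$ is nilpotent and equals the image of $\mathfrak{r}$, which forces $\mathfrak{r}_\infty\subseteq\mathfrak{h}$; moreover any semisimple subalgebra of such a direct product lies in its semisimple factor and hence commutes with the radical, so the image of $[\mathfrak{s},\mathfrak{r}]$ vanishes and $(\mathfrak{s},\mathfrak{r})\subseteq\mathfrak{h}$. Thus $\mathfrak{e}\subseteq\mathfrak{h}$, and by the characterization \cite[Proposition~3.11]{ArAnF} the exponential radical $E$ is precisely the integral subgroup of $G$ corresponding to $\mathfrak{e}$; in particular $E$ is integral with Lie algebra $\mathfrak{e}$. (Closedness of this integral subgroup, needed for $G/E$ to be a Lie group, will come out of the final step.)

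Next I would prove $\mathfrak{e}\subseteq\mathfrak{n}$. By the proof of Proposition~\ref{NRprop} we have $\mathfrak{n}=[\mathfrak{g},\mathfrak{g}]\cap\mathfrak{r}$. Now $\mathfrak{r}_\infty\subseteq[\mathfrak{r},\mathfrak{r}]\subseteq[\mathfrak{g},\mathfrak{g}]$ and $\mathfrak{r}_\infty\subseteq\mathfrak{r}$, so $\mathfrak{r}_\infty\subseteq\mathfrak{n}$; likewise $[\mathfrak{s},\mathfrak{r}]\subseteq[\mathfrak{g},\mathfrak{g}]$ and $[\mathfrak{s},\mathfrak{r}]\subseteq\mathfrak{r}$ since $\mathfrak{r}$ is an ideal, so $[\mathfrak{s},\mathfrak{r}]\subseteq\mathfrak{n}$, and since $\mathfrak{n}$ is a subalgebra the subalgebra $(\mathfrak{s},\mathfrak{r})$ it generates lies in $\mathfrak{n}$ as well. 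Hence $\mathfrak{e}=\mathfrak{r}_\infty+(\mathfrak{s},\mathfrak{r})\subseteq\mathfrak{n}$.

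Finally, since $E$ and $N$ are the integral subgroups of $G$ corresponding to $\mathfrak{e}$ and to $\mathfrak{n}$ (Proposition~\ref{NRprop}), the inclusion $\mathfrak{e}\subseteq\mathfrak{n}$ yields $E\subseteq N$. Then $E$ is an integral subgroup of $N$, so Lemma~\ref{closc} shows at once that $E$ is nilpotent, closed and simply connected, while integrality and the description of the Lie algebra were already obtained. I expect the only genuinely delicate point to be the first step — reconciling the group-theoretic definition of the exponential radical with the ideal $\mathfrak{e}$; everything afterwards is a short computation together with an appeal to Lemma~\ref{closc}.
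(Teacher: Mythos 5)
Your argument is correct in substance but follows a genuinely different route from the paper's. The paper disposes of the whole first step by citing \cite[Proposition 3.13]{ArAnF}, which already asserts that $E$ is simply connected and coincides with $\exp\mathfrak{e}$; it then gets $\mathfrak{e}\subset\mathfrak{n}$ from $\mathfrak{n}=[\mathfrak{g},\mathfrak{g}]\cap\mathfrak{r}$ exactly as you do (your version just writes out the inclusions $\mathfrak{r}_\infty\subseteq[\mathfrak{r},\mathfrak{r}]$ and $[\mathfrak{s},\mathfrak{r}]\subseteq[\mathfrak{g},\mathfrak{g}]\cap\mathfrak{r}$ explicitly), and deduces nilpotency of $E$ from $E\subset N$ rather than from Lemma~\ref{closc}. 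What your approach buys is a self-contained reconciliation of the quotient characterization \cite[Proposition 3.11]{ArAnF} with the formula~\eqref{fedef}: your minimality argument (radical of the quotient is nilpotent, hence kills $\mathfrak{r}_\infty$; semisimple subalgebras of a nilpotent-times-semisimple product commute with the radical, hence kill $(\mathfrak{s},\mathfrak{r})$) is sound, and the apparent circularity about closedness genuinely dissolves once the steps are reordered, since Lemma~\ref{closc} applies to the integral subgroup of $\mathfrak{e}$ as soon as $\mathfrak{e}\subseteq\mathfrak{n}$ is known, independently of its identification with $E$. The one point you take for granted and should pin down is that $\mathfrak{e}=\mathfrak{r}_\infty+(\mathfrak{s},\mathfrak{r})$ is an ideal of $\mathfrak{g}$: you need this both for $\mathfrak{g}/\mathfrak{e}$ to make sense in your direct-product computation and for the corresponding integral subgroup to be normal, so that it is a legitimate competitor in the ``largest quotient'' characterization. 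This is true ($\mathfrak{r}_\infty$ is characteristic in $\mathfrak{r}$, and the sum is shown to be an ideal in \cite{ArAnF}, cf.\ \cite[Remark 4.8]{AHHFG}), but it is not automatic from the definition of $(\mathfrak{s},\mathfrak{r})$ as a generated subalgebra and deserves either a citation or a short verification.
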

\begin{proof}
By \cite[Proposition 3.13]{ArAnF}, $E$ is simply connected and coincides with $\exp\mathfrak{e}$. In particular, $E$ is integral. Since $\mathfrak{n}=[\mathfrak{g},\mathfrak{g}]\cap\mathfrak{r}$ (see the proof of Proposition~\ref{NRprop}), we have $\mathfrak{e}\subset\mathfrak{n}$ and hence $E\subset N$. Since $N$ is nilpotent, so is~$E$.
\end{proof}

\subsection*{Exponential distortion}

Recall that a \emph{length function} on a locally compact group $G$ is a non-negative  function $\ell\!: G \to\mathbb{R}$ satisfying $\ell(gh)\le \ell(g)+\ell(h)$ for all $g, h \in G$. We always assume that a length function is locally bounded. A length function is said to be \emph{symmetric} if $\ell(g^{-1})=\ell(g)$ and $\ell(1)=0$.

The basic example is a word length function. Suppose that $G$ is compactly
generated, i.e.,  it is generated by a relatively compact subset $U$ that is a neighbourhood of the identity.
Then
\begin{equation}\label{wordlen}
\ell_U(g)\!: = \min \{ n \!: \, g \in U^{n} \},
\end{equation}
(where $U^0=\{1\}$) is a length function, which is called a \emph{word length function}.

Let $\varphi_1$ and $\varphi_2$ be non-negative functions on a set~$X$. We say that $\varphi_1$ is \emph{dominated} by $\varphi_2$ and write $\varphi_1\lesssim \varphi_2$ if there are $C,D>0$ such that
$$
\varphi_1(x)\le C\,\varphi_2(x)+D \quad\text{for all $x\in X$.}
$$
If $\varphi_1\lesssim \varphi_2$ and $\varphi_2\lesssim \varphi_1$, we say that $\varphi_1$ and $\varphi_2$ are \emph{equivalent} and write
$\varphi_1\simeq\varphi_2$.

Note that two length functions  on a locally compact group are equivalent if and only if the corresponding pre-metric spaces are quasi-isometric. In fact,  this kind of equivalence is rather weak (but there are weaker ones). In particular, it cannot see the difference between the length functions induced by a left-invariant Riemannian metric and a Carnot-Carath\'{e}odory metric (which is sub-Riemannian) on a simply connected nilpotent Lie group; see \cite[Appendix]{ArAMN}.

\begin{df}
Let $G$ be a locally compact group, $H$ a compactly generated closed subgroup and $\tilde\ell$ a word length function on~$H$.
We say that a length function $\ell$ is \emph{exponentially distorted} on $H$ if
$$
\ell \lesssim  \log(1+ \tilde{\ell}\,) \quad\text{on $H$}
$$
and \emph{strictly exponentially distorted} on $H$ if
$$
\ell\simeq  \log(1+ \tilde{\ell}\,)\quad\text{on $H$.}
$$
\end{df}

In the case when $G$ is also compactly generated and $\ell$ is a word length function, this definition gives the well-known (and well-studied) notions of a \emph{exponentially distorted subgroup} and  \emph{strictly exponentially distorted subgroup}. (Traditionally, a symmetric word length function is used in the definition but in view of Lemma~\ref{woslf} below this is irrelevant.)

The motivation for introducing these notions comes form the fact that the length function associated with a holomorphic homomorphism to a Banach PI-algebra has the same behaviour on the nilpotent radical (see Theorems~\ref{forHom} and~\ref{exdiNR}) as a word length function on the exponential radical. The latter is described in the following result.

\begin{thm}\label{expRdis}
Let $G$  be a connected complex Lie group and $E$ denote the exponential radical of $G$.

\emph{(A)} Every length function on $G$ is exponentially distorted on $E$.

\emph{(B)}  Every word length function  on $G$ is strictly exponentially distorted on $E$.
\end{thm}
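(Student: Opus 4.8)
The plan is to reduce everything to the known structure of the exponential radical $E$ as a simply connected nilpotent integral subgroup with Lie algebra $\mathfrak{e}$ (established in Proposition~\ref{exprpr} for the linear case, and available in general from \cite[Proposition 3.13]{ArAnF}), together with the description of $E$ as $\exp\mathfrak{e}$ with $\mathfrak{e}=\mathfrak{r}_\infty+(\mathfrak{s},\mathfrak{r})$. The essential input is that conjugation by suitable one-parameter subgroups of $G$ acts on $E$ (equivalently on $\mathfrak{e}$) with contracting/expanding eigenvalues, so that long elements of $E$ can be synthesized from short ones by a logarithmic number of conjugations; this is precisely the mechanism behind Cornulier's theorem for real groups \cite[\S\,6]{Co08}, and the complex case is parallel.

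For part (B), the strategy has two halves. First, the lower bound $\log(1+\tilde\ell)\lesssim\ell_U$ on $E$: since $E$ is simply connected nilpotent, a word length function $\tilde\ell$ on $E$ is equivalent (via the exponential coordinates and the standard estimates for nilpotent Lie groups) to a homogeneous quasi-norm, and the inclusion $E\hookrightarrow G$ is at most exponentially distorted because $G$ acts on the finite-dimensional space $\mathfrak{e}$ by an algebraic representation, whence $\|\mathrm{Ad}(g)\|\le e^{C\ell_U(g)}$; combining these two facts gives $\tilde\ell(h)\le e^{C\ell_U(h)}$ for $h\in E$, i.e.\ $\log(1+\tilde\ell)\lesssim\ell_U$. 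Second, the upper bound $\ell_U\lesssim\log(1+\tilde\ell)$ on $E$: here one uses the defining decomposition $\mathfrak{e}=\mathfrak{r}_\infty+(\mathfrak{s},\mathfrak{r})$ to produce, for each generator direction, a one-parameter subgroup $t\mapsto\gamma(t)$ in $G$ (coming either from $\mathfrak{s}$ acting on $\mathfrak{r}$ with a nonzero weight, or from the non-nilpotent part of $\mathfrak{r}$ acting on $\mathfrak{r}_\infty$) such that $\gamma(t)\,x\,\gamma(t)^{-1}=e^{\lambda t}\cdot(\text{something of bounded length})$; then an element of $E$ of word length $\sim R$ in its own geometry can be written as a product of $O(\log R)$ commutators/conjugates $\gamma(t)u\gamma(t)^{-1}$ with $t=O(\log R)$ and $u\in U\cap E$, each of which costs $O(\log R)$ in $\ell_U$ — a careful bookkeeping (as in \cite{ArAnF}) shows the total is still $O(\log R)$, giving $\ell_U(h)\lesssim\log(1+\tilde\ell(h))$. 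Part (A) is then immediate: any length function $\ell$ on $G$ satisfies $\ell\le C\ell_U+D$ (local boundedness plus subadditivity, i.e.\ $\ell$ is dominated by any word length function on a compactly generated group), so restricting to $E$ and applying the upper bound half of (B) yields $\ell\lesssim\ell_U\lesssim\log(1+\tilde\ell)$ on $E$.

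I would organize the argument so that the nilpotent-group estimate (word length $\simeq$ homogeneous quasi-norm, and its interaction with the adjoint action) is quoted as a lemma, and the conjugation-contraction construction is done once at the Lie-algebra level for $\mathfrak{e}$ and then transported to $E$ via $\exp$, which is a diffeomorphism here. One technical point worth isolating is that $G$ need not be linear in the statement, so $\mathrm{Ad}$ may fail to be faithful, but it is still a finite-dimensional representation, which is all the lower bound needs; and the construction of the contracting one-parameter subgroups only uses the semisimple-part and non-nilpotent-solvable-part actions, which survive passage to a locally isomorphic group.

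The main obstacle I expect is the upper bound in (B): controlling the length in $\ell_U$ of the synthesized word, i.e.\ showing that the $O(\log R)$ conjugations, each individually of length $O(\log R)$, combine to an element still of length $O(\log R)$ rather than $O((\log R)^2)$. This requires exploiting the group relations so that the conjugating elements telescope (one reuses the same $\gamma(t)$ across many factors, or builds the target by a Horner-type nested-commutator scheme as in \cite{ArAnF}), and handling the iterated-commutator structure coming from the nilpotency of $\mathfrak{e}$ — the nested brackets $[\mathfrak{s},[\mathfrak{s},\mathfrak{r}]]$, etc. — so that higher central terms are also reached with only a logarithmic cost. This is exactly the delicate part already carried out for word length functions in \cite{ArAnF}, and the proof here should cite that computation rather than repeat it.
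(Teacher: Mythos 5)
Your proposal is correct and, at bottom, takes the same route as the paper: the paper proves Part (B) simply by quoting \cite[Theorem 6.5(v)]{Co08} (the conjugation/contraction mechanism you sketch is exactly Cornulier's argument), and derives Part (A) from Part (B) via the observation that every locally bounded length function is dominated by a word length function, just as you do. The only divergence is one of attribution: the delicate $O(\log R)$ bookkeeping you rightly flag as the hard point is carried out in \cite{Co08}, which is what the paper cites directly, rather than in \cite{ArAnF}.
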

\begin{proof}
The Part~(B) is a partial case of \cite[Theorem 6.5(v)]{Co08} and Part~(A) easily follows Part~(B).
\end{proof}

We want to describe both cases, of the nilpotent and exponential radical, in unified terms that can also be applied to intermediate cases. Our first goal is to construct length functions strictly exponentially distorted on some subgroups of~$N$.

In the following two lemmas we collect some operations that enable us to obtain new length functions from given ones.

\begin{lm}\label{safes}
Let $\varphi_1$ and $\varphi_2$ be non-negative functions on a set~$X$ and $\varphi_1\lesssim \varphi_2$. If
$f\!:\mathbb{R}_+\to\mathbb{R}_+$ is a subadditive increasing  function, then $f\circ\varphi_1\lesssim f\circ\varphi_2$.
\end{lm}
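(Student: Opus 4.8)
The plan is to combine the linear domination hypothesis with the two standard features of a subadditive increasing function: that it is monotone, hence can be applied to both sides of an inequality, and that it grows at most linearly along arithmetic progressions, i.e. $f(nt)\le n\,f(t)$ for every positive integer $n$ and every $t\ge 0$.

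First I would unwind the definition of $\varphi_1\lesssim\varphi_2$: there are constants $C,D>0$ with $\varphi_1(x)\le C\,\varphi_2(x)+D$ for all $x\in X$. Since $\varphi_2$ is non-negative, the argument $C\,\varphi_2(x)+D$ lies in $\mathbb{R}_+$, so that $f$ may be applied there; monotonicity of $f$ then gives $f(\varphi_1(x))\le f\bigl(C\,\varphi_2(x)+D\bigr)$ for all $x\in X$.

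Next I would estimate the right-hand side. Put $n=\lceil C\rceil$, a positive integer with $C\le n$. Using that $f$ is increasing and then subadditive,
$$
f\bigl(C\,\varphi_2(x)+D\bigr)\le f\bigl(n\,\varphi_2(x)+D\bigr)\le f\bigl(n\,\varphi_2(x)\bigr)+f(D)\le n\,f\bigl(\varphi_2(x)\bigr)+f(D),
$$
where the last step is the elementary consequence $f(nt)\le n\,f(t)$ of subadditivity, proved by induction from $f(t+t)\le f(t)+f(t)$. Combining this with the previous paragraph yields $f(\varphi_1(x))\le n\,f(\varphi_2(x))+f(D)$ for all $x\in X$, which is precisely $f\circ\varphi_1\lesssim f\circ\varphi_2$ with constants $C'=n$ and $D'=\max\{f(D),1\}$, the final adjustment being a harmless one needed only in case $f(D)=0$.

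I do not expect any real obstacle. The single point that requires a moment's care is that one may not write $f(Ct)\le C\,f(t)$ when $C$ is not an integer; this is why one first passes to the ceiling $\lceil C\rceil$ and invokes monotonicity before using the integer-multiple bound, and why the hypothesis that $f$ is increasing (not merely subadditive) is genuinely used.
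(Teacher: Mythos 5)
Your proof is correct and follows essentially the same route as the paper: apply monotonicity of $f$ to the inequality $\varphi_1(x)\le C\varphi_2(x)+D$, replace $C$ by an integer upper bound, and use subadditivity to obtain $f(C\varphi_2(x)+D)\le C f(\varphi_2(x))+f(D)$. The paper's proof is just a terser version of the same argument (it says ``we can assume that $C$ is an integer'' where you take the ceiling explicitly).
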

\begin{proof}
Take $C,D>0$ such that $\varphi_1(x)\le C\varphi_2(x)+D $ for every $x\in X$. Take $n\in\mathbb{N}$ such that $n\ge C$. Then
$$f(\varphi_1(x))\le  f(n\varphi_2(x)+D)\le n f(\varphi_2(x))+f(D)$$ for every $x\in X$.
\end{proof}

\begin{lm}\label{3oper}
Let $G$ be  a locally compact group.

\emph{(1)}~If $\ell$ is a length function on $G$ and $f\!:\mathbb{R}_+\to\mathbb{R}_+$ is a subadditive increasing continuous function such that $f(0)=0$, then $f\ell$ is a length function on $G$.

\emph{(2)}~If $\ell'$ and $\ell''$ are length functions on $G$, then  $\ell'+\ell''$ \emph{(}and $\max\{\ell',\,\ell''\}$, which is equivalent\emph{)}   is a length function on $G$.

\emph{(3)}~If $H$ is a closed normal subgroup of $G$, $\sigma\!:G\to G/H$ denotes the quotient homomorphism and $\ell$ is a length function on $G/H$, then $\ell\sigma$ is a length function on $G$.
\end{lm}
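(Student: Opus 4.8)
The plan is to check, in each of the three cases, the three defining properties of a length function: non-negativity, subadditivity, and local boundedness. Non-negativity is immediate throughout, since $f$, $\ell'+\ell''$ (and $\max\{\ell',\ell''\}$), and $\ell$ all take values in $\mathbb{R}_+$. So in each part only subadditivity and local boundedness require comment.

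For part~(1), subadditivity is obtained by composing the two hypotheses on $f$: for $g,h\in G$ we have $\ell(gh)\le\ell(g)+\ell(h)$, so since $f$ is increasing, $f(\ell(gh))\le f(\ell(g)+\ell(h))$, and then subadditivity of $f$ gives $f(\ell(g)+\ell(h))\le f(\ell(g))+f(\ell(h))$. For local boundedness, observe that an increasing function $\mathbb{R}_+\to\mathbb{R}_+$ is automatically bounded on every bounded interval; since $\ell$ is locally bounded, on a compact neighbourhood $K$ of a given point one has $\ell\le M$, hence $f\circ\ell\le f(M)$ on $K$. (Continuity of $f$ and the condition $f(0)=0$ are not needed for this lemma; they are included because they are convenient when such $f$ are substituted later, e.g.\ $f=\log(1+\,\cdot\,)$.)

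For part~(2), subadditivity is additive: $(\ell'+\ell'')(gh)\le\bigl(\ell'(g)+\ell'(h)\bigr)+\bigl(\ell''(g)+\ell''(h)\bigr)=(\ell'+\ell'')(g)+(\ell'+\ell'')(h)$, and a finite sum of locally bounded functions is locally bounded. For the maximum, $\max\{\ell',\ell''\}(gh)\le\max\bigl\{\ell'(g)+\ell'(h),\,\ell''(g)+\ell''(h)\bigr\}\le\max\{\ell',\ell''\}(g)+\max\{\ell',\ell''\}(h)$, local boundedness being clear; the asserted equivalence with $\ell'+\ell''$ is the elementary sandwich $\max\{\ell',\ell''\}\le\ell'+\ell''\le2\max\{\ell',\ell''\}$. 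For part~(3), subadditivity comes directly from $\sigma$ being a homomorphism: $\ell\sigma(gh)=\ell(\sigma(g)\sigma(h))\le\ell(\sigma(g))+\ell(\sigma(h))$.

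The only point needing a genuine (if routine) argument is local boundedness in part~(3): $\sigma$ is continuous, so it sends a compact neighbourhood of a point of $G$ to a compact subset of $G/H$; a locally bounded function on a locally compact group is bounded on compact sets, by covering such a set with finitely many neighbourhoods on which the function is bounded; hence $\ell$ is bounded on that image and $\ell\sigma$ is bounded on the neighbourhood. This is the main (and minor) obstacle; closedness of $H$ is used only to ensure that $G/H$ is again a locally compact group, so that "length function on $G/H$" is meaningful.
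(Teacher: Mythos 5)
Your verification is correct and complete; the paper itself omits the proof with the remark that it is straightforward, and your routine check of subadditivity and local boundedness in each of the three cases (including the observation that continuity of $f$ and $f(0)=0$ are not actually needed for part~(1), and the finite-covering argument for local boundedness of $\ell\sigma$ in part~(3)) is exactly the intended argument.
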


The proof is straightforward.

\begin{lm}\label{woslf}
A word length function on a compactly generated locally compact group is equivalent to a symmetric word length function.
\end{lm}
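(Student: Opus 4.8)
The plan is to symmetrise the generating set and check directly that the resulting word length function is both symmetric and equivalent to the given one. Fix a word length function $\ell_{U}$, where $U$ is a relatively compact neighbourhood of the identity with $\bigcup_{n}U^{n}=G$, and set $V\!:=U\cup U^{-1}$. Since inversion is a homeomorphism, $U^{-1}$, and hence $V$, is relatively compact; moreover $V$ is a neighbourhood of $1$, $\bigcup_{n}V^{n}\supseteq\bigcup_{n}U^{n}=G$, and $V^{-1}=V$. Thus $\ell_{V}$ is again a word length function, and, because $(V^{n})^{-1}=(V^{-1})^{n}=V^{n}$, it satisfies $\ell_{V}(g^{-1})=\ell_{V}(g)$ for all $g$ and $\ell_{V}(1)=0$; that is, $\ell_{V}$ is a symmetric word length function. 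It remains to prove $\ell_{U}\simeq\ell_{V}$.

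One direction is trivial: $U\subseteq V$ gives $U^{n}\subseteq V^{n}$ for every $n$, hence $\ell_{V}\le\ell_{U}$. For the other it suffices to find an integer $k$ with $V\subseteq U^{k}$, since this yields $V^{n}\subseteq U^{kn}$ and therefore $\ell_{U}\le k\,\ell_{V}$. As $1\in U$, this reduces to showing $U^{-1}\subseteq U^{k}$ for some $k$, and here I would run the standard compactness argument: with $W\!:=\operatorname{int}U$, the sets $WU^{m}$ ($m\ge1$) are open, increasing, and satisfy $U^{m}\subseteq WU^{m}\subseteq U^{m+1}$, so $\bigcup_{m\ge1}WU^{m}=\bigcup_{m}U^{m}=G$; since $\overline{U^{-1}}$ is compact it lies in a single $WU^{k}$, whence $U^{-1}\subseteq U^{k+1}$ and $V\subseteq U^{k+1}$.

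The one point that needs genuine care, rather than routine checking, is this last covering step --- in particular the equality $\bigcup_{m\ge1}WU^{m}=G$, which relies on $U$ being a neighbourhood of the identity (so that $1\in W$), not merely a generating set. Combining the two inequalities gives $\ell_{U}\simeq\ell_{V}$, which proves the lemma; incidentally, the same reasoning shows that any two word length functions on a compactly generated locally compact group are equivalent.
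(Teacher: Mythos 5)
Your proof is correct and follows essentially the same route as the paper: symmetrise the generating set to $V=U\cup U^{-1}$, note $\ell_V\le\ell_U$, and use relative compactness of $V$ together with openness of products of the form $WU^m$ to trap $V$ inside some $U^k$, giving $\ell_U\lesssim\ell_V$. The only difference is that you spell out the covering argument that the paper compresses into one sentence.
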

\begin{proof}
Let $U$ be a relatively compact generating neighbourhood of the identity and $V=U\cup U^{-1}$.   Obviously $\ell_V$ is symmetric and $\ell_V\le \ell_U$. Since the product of finitely many open subsets of a topological group is open and $V$ is relatively compact,  there is $n\in\mathbb{N}$ such $V\subset U^n$. Therefore $\ell_U\lesssim \ell_V$.
\end{proof}

We also need the symmetric length function
$$
\ell^{sym}(g)\!:=\ell(g)+\ell(g^{-1})
$$
associated with a given length function~$\ell$.

\begin{lm}\label{symexd}
Let $G$ be a locally compact group and $H$ a compactly generated closed subgroup.
A length function $\ell$ on $G$ is \emph{(}strictly\emph{)} exponentially distorted on $H$ if and only if $\ell^{sym}$ is \emph{(}strictly\emph{)} exponentially distorted on $H$.
\end{lm}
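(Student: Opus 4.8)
The plan is to reduce the lemma to elementary manipulations of the relation $\lesssim$, resting on two observations. First, since $\ell\ge 0$, we have $\ell(g)\le\ell(g)+\ell(g^{-1})=\ell^{sym}(g)$ for every $g\in G$; hence for any function $\varphi$ on $H$, $\ell^{sym}\lesssim\varphi$ on $H$ implies $\ell\lesssim\varphi$ on $H$, so the implication ``$\ell^{sym}$ exponentially distorted on $H$ $\Rightarrow$ $\ell$ exponentially distorted on $H$'' is immediate. Second, by Lemma~\ref{woslf} the word length function $\tilde\ell$ on $H$ may be taken symmetric, and by Lemma~\ref{safes} (applied to the subadditive increasing function $t\mapsto\log(1+t)$, which vanishes at $0$) replacing $\tilde\ell$ by an equivalent word length function does not change the equivalence class of $\log(1+\tilde\ell)$, hence not the notion of exponential distortion on $H$; so we fix such a symmetric $\tilde\ell$, for which $\log(1+\tilde\ell(h^{-1}))=\log(1+\tilde\ell(h))$ for all $h\in H$ (recall $h\in H\Rightarrow h^{-1}\in H$).

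Next I would prove the implications ``$\ell$ (strictly) exponentially distorted on $H$ $\Rightarrow$ $\ell^{sym}$ (strictly) exponentially distorted on $H$''. Assume $\ell(h)\le C\log(1+\tilde\ell(h))+D$ for all $h\in H$. Applying this with $h^{-1}$ in place of $h$ and using the symmetry of $\tilde\ell$ gives $\ell(h^{-1})\le C\log(1+\tilde\ell(h))+D$; adding the two inequalities gives $\ell^{sym}(h)\le 2C\log(1+\tilde\ell(h))+2D$, so $\ell^{sym}\lesssim\log(1+\tilde\ell)$ on $H$. If in addition $\log(1+\tilde\ell)\lesssim\ell$ on $H$, then $\log(1+\tilde\ell)\lesssim\ell\le\ell^{sym}$ on $H$, and combined with the previous bound this yields $\ell^{sym}\simeq\log(1+\tilde\ell)$ on $H$. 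Together with the first paragraph, this establishes three of the four required implications: the non-strict equivalence in full and the ``$\Rightarrow$'' half of the strict one.

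The remaining implication, ``$\ell^{sym}$ strictly exponentially distorted on $H$ $\Rightarrow$ $\ell$ strictly exponentially distorted on $H$'', is where I expect the main obstacle to lie. Its upper-bound half, $\ell\lesssim\log(1+\tilde\ell)$ on $H$, is already free from the first paragraph; the difficulty is to pass from the hypothesis $\log(1+\tilde\ell)\lesssim\ell^{sym}=\ell(\cdot)+\ell((\cdot)^{-1})$ on $H$ to $\log(1+\tilde\ell)\lesssim\ell$ on $H$. This is not a formal consequence of the preceding inequalities (a length function can be much smaller than its symmetrization on a large part of $H$), so the argument has to use more: the natural attempt is to observe that $h\mapsto\ell(h^{-1})$ is again a length function on $G$ and to bound it by $\ell$ on $H$, using the symmetry of $\tilde\ell$, the local boundedness of $\ell$, and the subgroup structure of $H$; or one invokes the extra relation $\ell\simeq\ell^{sym}$ on $H$ (which holds, for instance, whenever $\ell$ is itself a word length function), under which the implication is immediate. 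I would concentrate the effort on this last step, the other parts being routine.
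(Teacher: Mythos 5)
Your treatment of the first three implications is correct and is essentially the paper's own argument: after replacing $\tilde\ell$ by an equivalent symmetric word length function (Lemma~\ref{woslf}, together with Lemma~\ref{safes} applied to $t\mapsto\log(1+t)$), the substitution $h\mapsto h^{-1}$ shows that the conditions $\ell\lesssim\log(1+\tilde\ell\,)$, $\ell^{op}\lesssim\log(1+\tilde\ell\,)$ and $\ell^{sym}\lesssim\log(1+\tilde\ell\,)$ on $H$ are all equivalent; this settles the non-strict case and the implication ``$\ell$ strictly distorted $\Rightarrow$ $\ell^{sym}$ strictly distorted''. Up to that point you reproduce the paper's proof.

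The remaining implication is a genuine gap in your write-up: you only describe two candidate strategies and prove nothing. Your instinct that this step is not a formal consequence of the others is, however, correct, and in fact no argument can close the gap, because the implication is false as stated. Take $G=H=\mathbb{Z}$ and $\ell(n)=\log(1+\max(n,0))$. This is a locally bounded length function, and $\ell^{sym}(n)=\log(1+|n|)=\log(1+\tilde\ell(n))$, so $\ell^{sym}$ is strictly exponentially distorted on $H$; yet $\ell$ vanishes on the negative integers, so $\log(1+\tilde\ell\,)\not\lesssim\ell$ and $\ell$ is not strictly exponentially distorted. (In particular the auxiliary relation $\ell\simeq\ell^{sym}$ on $H$ that you propose to invoke, and the bound $\ell^{op}\lesssim\ell$ on $H$, are exactly what fail here.) The paper's own proof disposes of the strict case with the single remark that it is ``similar'' to the non-strict one; that remark covers the direction you did prove but not its converse, so the defect is present in the paper as well. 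It is harmless for the rest of the paper: Lemma~\ref{symexd} is used only in the proof of Proposition~\ref{omegpr0}, and only in the non-strict direction ``$\ell$ exponentially distorted $\Rightarrow$ $\ell^{sym}$ exponentially distorted'', which both you and the paper establish correctly; the strict distortion needed there is then proved directly, not via this lemma.
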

\begin{proof}
Consider the length function $\ell^{op}(g)\!:=\ell(g^{-1})$. By the definition, $\ell$ is exponentially distorted on $H$ if $\ell\lesssim\log(1+ \tilde{\ell})$ on $H$ for some  word length function $\tilde\ell$ on~$H$. It follows from Lemma~\ref{woslf} that we can assume that $\tilde{\ell}$ is symmetric. Then the conditions that $\ell^{op}\lesssim\log(1+ \tilde{\ell})$ and $\ell^{sym}\lesssim\log(1+ \tilde{\ell})$ on $H$ are both equivalent to the condition that $\ell\lesssim\log(1+ \tilde{\ell})$ on $H$.

The case of strict exponential distortion is similar.
\end{proof}

The main goal of this section is to construct a sufficiently big length function exponentially distorted on  an integral subgroup contained in the nilpotent radical. The three operations in Lemma~\ref{3oper} are very useful (see Example~\ref{Heisex}) but still insufficient to our purposes (see Example~\ref{sixdim}). We need another well-known construction.

Let $A$ be a Banach algebra with norm $\|\cdot\|$. Then every (strongly) continuous homomorphism  $\pi\!:G\to {\mathop{\mathrm{GL}}\nolimits}(A)$, where ${\mathop{\mathrm{GL}}\nolimits}(A)$ is the group of invertible elements of~$A$,  induces the length functions given by
\begin{equation}\label{lfhom}
\ell_\pi(g)\!:=\log\|\pi(g)\|\quad\text{and}\quad \ell_\pi^{sym}(g)=\log\|\pi(g)\|+\log\|\pi(g^{-1})\|\qquad(g\in G).
\end{equation}
($\ell_\pi$ is locally bounded by the uniform boundedness principle.)

Now we consider examples.

\begin{exm}\label{Heisex}
Let $H$ be the $3$-dimensional complex Heisenberg group, i.e., the group of matrices of the form
\begin{equation}\label{3Hei}
h=
\begin{pmatrix}
 1& a& c\\
 0&1 & b\\
 0 & 0&1
\end{pmatrix}\qquad
(a,b,c\in\mathbb{C}).
\end{equation}
It is well known that every word length function on $H$ is equivalent to $|a|+|b|+|c|^{1/2}$. The nilpotent radical $N$ coincides with the centre of $H$ (the $1$-dimensional subgroup of matrices such that $a=b=0$). Here we describe two methods for constructing a length function strictly exponentially distorted on $N$.

(1)~We can easily do this using the operations in Lemma~\ref{3oper}. Indeed, taking the composition of a word length function with the subadditive increasing function $x\mapsto\log(1+x)$, we define a length function $\ell'$ that is equivalent to $\log(1+|a|+|b|+|c|)$ by Lemma~\ref{safes}. (Note that the  exponent $1/2$ becomes irrelevant after applying the logarithm.) Thus, $\ell'$ is strictly exponentially distorted on the whole $H$. We can increase it so that it becomes strictly exponentially distorted only on $N$ but not on the whole~$H$.  Denote by $\ell''$ the length function $h\mapsto|a|+|b|$ on $H$. It is a lift of a word length function on $H/N$. Then $\ell'+\ell''$ is equivalent to  $|a|+|b|+\log(1+|c|)$ and so it is strictly exponentially distorted on $N$. It is also not hard  to see that every length function exponentially distorted on $N$ is dominated by  $\ell'+\ell''$.

Note that the method can also be  used  to obtain length functions with different types of distortion, e.g., equivalent to  $|a|^\alpha+|b|^\beta+|c|^\gamma$, where $\alpha,\beta \in(0,1]$ and $\gamma\in(0,1/2]$.

(2)~The above construction does not work when the group is more complicated (see Example~\ref{sixdim}). The another method that can be applied in the general case is as follows. Denote by $\pi$ the standard representation of $H$ (treating the matrix in~\eqref{3Hei} as an operator) and  a submultiplicative norm $\|\cdot\|$ on the algebra of $3\times 3$ matrices. Then the length function $\ell_\pi$ given by~\eqref{lfhom} is equivalent to $\log(1+|a|+|b|+|c|)$.  Thus we can replace $\ell'+\ell''$ considered above by $\ell_\pi+\ell''$. For a general form of this construction see Proposition~\ref{omegpr0}.
\end{exm}

\begin{exm}\label{sixdim}
Let $\mathfrak{g}$ be the $6$-dimensional solvable Lie algebra in \cite[Example 5.14]{ArAnF}. We do not need the whole definition and use only the fact that both $\mathfrak{g}/\mathfrak{e}$ and $\mathfrak{e}$ are isomorphic to the $3$-dimensional Heisenberg Lie algebra.
Denote by $G$ the associated simply connected Lie group. Consider the coordinates $(s_1, s_2, s_3, t_1, t_2, t_3)$ on $G$, corresponding to the canonical coordinates of the first kind on both copies of the Heisenberg algebra (first three for $\mathfrak{g}/\mathfrak{e}$ and last three for $\mathfrak{e}$; see details in the above reference). Then each word length function on $G$ is equivalent to $$|s_1|+|s_2|+|s_3|^{1/2}+\log(1+|t_1|+|t_2|+|t_3|).$$ (This is follows from \cite[Theorem 4.1]{ArAnF} and  can also be obtained from Theorem~\ref{maindisto} below). Note that the nilpotent radical $N$ is determined by the condition $s_1=s_2=0$ and the exponential radical by the condition $s_1=s_2=s_3=0$.

If we directly apply the function $x\mapsto\log(1+x)$ as in Example~\ref{Heisex}, then we get the double logarithm of $t_1, t_2, t_3$. This corresponds to the superexponential distortion on $E$ and the resulting length function is not  strictly exponential distorted on $N$. So the first method in Example~\ref{Heisex} does not work properly. But the second method is useful for making distortion strictly exponential and we confirm this claim below in Proposition~\ref{omegpr0}.
\end{exm}

\begin{exm}
Note that a word length function can also be of the form~\eqref{lfhom} (up to equivalence). Indeed, consider the following $2$-dimensional
representation of the group~$\mathbb{C}$:
$$
\pi(z)=\begin{pmatrix}    e^z &  0\\
0&  e^{iz}\end{pmatrix}.
$$
Let $\ell_\pi$ and $\ell_\pi^{sym}$ be as in \eqref{lfhom}. Since
$$
\max\{|e^z|,\,|e^{-z}|,\, |e^{iz}|,\,|e^{-iz}|\}=\max\{|e^z|,\,|e^{iz}|\}=\max\{e^{|x|},\, e^{|y|}\},
$$
where $z=x+iy$ with $x,y\in\mathbb{R}$,
we have
$$
\ell_\pi(z)\simeq
\ell_\pi^{sym}(z)\simeq \max\{|x|,\, |y|\}.
$$
So it is equivalent to the standard length function $|z|=\sqrt{x^2+y^2}$, which in turn is equivalent to a word length function.
\end{exm}

\subsection*{PI-completions}
Recall that an associative algebra $A$ over $\mathbb{C}$ is called a PI-\emph{algebra} (satisfies a \emph{polynomial identity}) if there is a non-trivial non-commutative polynomial $p$ (i.e., an element of a free algebra in $n$ generators) such that  $p(a_1,\ldots,a_n)=0$ for all $a_1,\ldots,a_n\in A$. It is important for us that  commutative and finite-dimensional algebras satisfy PI. For general PI-algebras see, e.g., \cite{AGPR} or \cite{KKR16}; for Banach PI-algebras see \cite{Kr87} or \cite{Mu94}.

Our study is essentially based on the following result, which gives necessary and sufficient conditions for a completion of the universal enveloping algebra of a Lie algebra to satisfy a PI. We denote by $U(\mathfrak{g})$ the universal enveloping algebra of a Lie algebra $\mathfrak{g}$.

\begin{thm}\label{forHom} \cite[Theorem 2]{ArPiLie}
Suppose that $\mathfrak{g}$ is a finite-dimensional complex Lie algebra, $A$ is a Banach algebra and $\theta \!:U(\mathfrak{g})\to A$ is a homomorphism of associative algebras. Let $\|\cdot\|$ be the norm on~$A$ and $|\cdot|$ some norm on~$\mathfrak{n}$. If $\theta $ has dense range, then the following conditions are equivalent.

\emph{(1)}~$A$ is a \emph{PI}-algebra.

\emph{(2)}~$\theta (\eta)$ is nilpotent for every  $\eta\in \mathfrak{n}$.

\emph{(3)}~$e^{\theta(\eta)}-1$ is nilpotent for every  $\eta\in \mathfrak{n}$.

\emph{(4)}~There are $C>0$ and $\alpha>0$ such that $\|e^{\theta (\eta)}\|\le C (1+|\eta|)^\alpha$ for every $\eta\in\mathfrak{n}$.
\end{thm}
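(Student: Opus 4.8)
The plan is to establish the equivalences $(2)\Leftrightarrow(3)\Leftrightarrow(4)$ by soft functional‑analytic arguments and then to link this circle with $(1)$ through the structure theory of $U(\mathfrak g)$. Throughout I would use the following standard facts. By the definition of the nilpotent radical, $\mathfrak n$ is annihilated by every finite‑dimensional irreducible representation of $\mathfrak g$; hence in an arbitrary finite‑dimensional representation of $\mathfrak g$, passing to a composition series, $\mathfrak n$ acts by strictly block‑triangular, in particular nilpotent, operators. Also $A$ is unital with unit $\theta(1)$ (by density of the range), and since $\theta|_{\mathfrak n}$ is linear on a finite‑dimensional space there is $M>0$ with $\|\theta(\eta)\|\le M|\eta|$ for all $\eta\in\mathfrak n$.

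For $(2)\Rightarrow(3)$: if $\theta(\eta)$ is nilpotent then $e^{\theta(\eta)}-1=\theta(\eta)\,g(\theta(\eta))$ with $g$ a polynomial and $g(0)=1$, so $g(\theta(\eta))$ is invertible in and commutes with the unital subalgebra generated by $\theta(\eta)$, and $e^{\theta(\eta)}-1$ is nilpotent. For $(3)\Rightarrow(2)$: since $\mathfrak n$ is a complex vector space, $(3)$ applies to $t\eta$ for every $t\in\mathbb C$; for small $|t|$ local boundedness gives $\|e^{\theta(t\eta)}-1\|<1$, and the holomorphic functional calculus ($\log\circ\exp=\mathrm{id}$ near $0$) yields $\theta(t\eta)=\log\bigl(1+(e^{\theta(t\eta)}-1)\bigr)$, a finite sum because $e^{\theta(t\eta)}-1$ is nilpotent, so $\theta(t\eta)$, and after rescaling $\theta(\eta)$, is nilpotent. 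For $(2)\Rightarrow(4)$: $\theta(\mathfrak n)$ is a finite‑dimensional nilpotent Lie subalgebra of $(A,[\cdot\,,\cdot])$ all of whose elements are nilpotent, so by Engel's theorem the associative subalgebra it generates, namely the image of the augmentation ideal of $U(\mathfrak n)$, is nilpotent of some index $N$; then $\theta(\eta)^N=0$ and $\|e^{\theta(\eta)}\|\le\sum_{k<N}(M|\eta|)^k/k!\le C(1+|\eta|)^{N-1}$. For $(4)\Rightarrow(2)$: the $A$‑valued entire function $z\mapsto e^{z\theta(\eta)}$ satisfies, applying $(4)$ to $z\eta$, a polynomial growth bound, hence by the vector‑valued Liouville theorem is a polynomial of degree $\le\lfloor\alpha\rfloor$, and differentiating at $0$ gives $\theta(\eta)^{\lfloor\alpha\rfloor+1}=0$. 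Thus $(2),(3),(4)$ are equivalent, and equivalent to the existence of $N$ with $\theta(\eta)^N=0$ for all $\eta\in\mathfrak n$.

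To connect with $(1)$ I would argue both ways. For $(1)\Rightarrow(2)$: by Kaplansky's theorem there is $n$ with every irreducible representation of $A$ of dimension $\le n$; composing such a $\pi$ with $\theta$ and restricting to $\mathfrak g$ shows $\pi(\theta(\eta))$ nilpotent, so $\theta(\eta)$ is nilpotent modulo $\mathrm{rad}(A)$, in particular quasi‑nilpotent. The upgrade to honest nilpotency uses the Lie structure of $\mathfrak n=[\mathfrak g,\mathfrak r]$: fix a Cartan subalgebra $\mathfrak h$ of $\mathfrak g$ and the weight decomposition of $\mathfrak n$; for a weight vector $\eta$ of weight $\mu\neq0$ one has $[\theta(h),\theta(\eta)]=\mu(h)\theta(\eta)$, hence $e^{s\theta(h)}\theta(\eta)^k e^{-s\theta(h)}=e^{ks\mu(h)}\theta(\eta)^k$, and letting $s\to\infty$ along a ray on which $s\mu(h)$ is real and positive — the growth of $\|e^{\pm s\theta(h)}\|$ being governed by $\sup\mathrm{Re}\,\mathrm{sp}(\theta(h))$, which is finite since $\mathrm{sp}(\theta(h))$ is compact — forces $\theta(\eta)^k=0$ once $k$ exceeds the diameter of $\mathrm{Re}\bigl(\mu(h)^{-1}\mathrm{sp}(\theta(h))\bigr)$; this is a Wielandt--Wintner‑type estimate. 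Weight‑zero vectors of $\mathfrak n$ lie in $[\mathfrak n,\mathfrak n]$ and, modulo the lower terms of the lower central series, are central sums of commutators in $A$, hence nilpotent there by the Kleinecke--Shirokov theorem together with the fact that in a PI‑algebra the image of a central sum of commutators vanishes in every prime quotient (Posner's theorem and the vanishing of reduced traces of commutators), so such an element lies in the prime radical; an induction on the nilpotency class of $\mathfrak n$, with a further appeal to Engel's theorem, assembles this into $(2)$.

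For $(2)\Rightarrow(1)$: since $\mathfrak n$ is an ideal, $U(\mathfrak g)\mathfrak n=\mathfrak n\,U(\mathfrak g)$, and combined with the nilpotency of the subalgebra generated by $\theta(\mathfrak n)$ this shows the closed two‑sided ideal $I\subseteq A$ generated by $\theta(\mathfrak n)$ is nilpotent; hence $A/I$ is a dense‑range image of $U(\mathfrak g/\mathfrak n)$, where $\mathfrak g/\mathfrak n$ is reductive. For a reductive Lie algebra every dense‑range Banach‑algebra image is a PI‑algebra: each root vector, being an $\mathrm{ad}$‑eigenvector, acts with spectrum $\{0\}$ in every representation by the estimate above, indeed nilpotently with uniformly bounded index, and by Dixmier and Duflo's description of the primitive ideals of $U(\mathfrak g/\mathfrak n)$, together with the compactness of the spectra of the Casimir elements (whose eigenvalues on the finite‑dimensional modules tend to infinity), this rules out the infinite‑dimensional primitive quotients and bounds the dimensions of the irreducible representations; a uniform bound on those dimensions, plus the nilpotency of $I$, yields a polynomial identity for $A$. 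I expect the main obstacle to be precisely this last circle of ideas — promoting ``nilpotent in every representation'' to honest nilpotency with a uniform index, and to a uniform bound on representation dimensions — where the spectral and growth‑bound estimates for $\mathrm{ad}$‑eigenvectors and the representation theory of $U(\mathfrak g)$ carry the real weight; the purely analytic equivalences $(2)\Leftrightarrow(3)\Leftrightarrow(4)$ are by comparison routine.
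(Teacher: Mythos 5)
First, a point of order: Theorem~\ref{forHom} is not proved in this paper at all --- it is imported verbatim as \cite[Theorem 2]{ArPiLie}, so there is no internal proof to compare yours against; what you have written is an attempted reconstruction of the main theorem of a separate paper. Within that reconstruction, the cycle $(2)\Leftrightarrow(3)\Leftrightarrow(4)$ is essentially sound: the Liouville argument for $(4)\Rightarrow(2)$, the rescaling-plus-functional-calculus argument for $(3)\Rightarrow(2)$, and the termination of the exponential series for $(2)\Rightarrow(3)$ all work. The one caveat is your appeal to ``Engel's theorem'' to get a \emph{uniform} nilpotency index in $(2)\Rightarrow(4)$: Engel's theorem is a statement about finite-dimensional modules and does not apply verbatim inside an infinite-dimensional Banach algebra. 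The conclusion (the image of the augmentation ideal of $U(\mathfrak n)$ is nilpotent once each basis vector maps to a nilpotent element) is true, but it needs a weighted PBW argument --- reordering a monomial against a basis compatible with the lower central series only produces terms of higher weight, and every standard monomial of sufficiently high weight contains some $\theta(e_i)^{m_i}=0$ --- not a citation of Engel.

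The genuine gaps are in $(1)\Leftrightarrow(2)$, which is exactly where the cited paper's work lies. For $(1)\Rightarrow(2)$, Kaplansky correctly gives $\theta(\eta)^n\in\operatorname{rad}(A)$, and you rightly note this cannot be upgraded by PI-ness alone (the Volterra algebra is commutative, hence PI, with non-nilpotent quasi-nilpotents), so the Lie structure must enter; but your mechanism is broken in two places. First, the claim that ``weight-zero vectors of $\mathfrak n$ lie in $[\mathfrak n,\mathfrak n]$'' is false already for the Heisenberg algebra, where $\mathfrak n$ is the one-dimensional centre, every vector has weight zero (a Cartan subalgebra of a nilpotent algebra is the whole algebra), and $[\mathfrak n,\mathfrak n]=0$. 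Second, the conjugation estimate $e^{s\theta(h)}\theta(\eta)^k e^{-s\theta(h)}=e^{ks\mu(h)}\theta(\eta)^k$ presumes $\eta$ is a genuine $\operatorname{ad}$-eigenvector, whereas a Cartan subalgebra of a solvable $\mathfrak g$ only yields a generalized (Fitting) weight decomposition of $\mathfrak n$; and even granting nilpotency of each nonzero-weight component, you never explain how to pass to nilpotency of an arbitrary $\eta\in\mathfrak n$, since a sum of non-commuting nilpotents need not be nilpotent --- this requires another uniform-index/nil-subalgebra argument, not a wave at ``induction on the nilpotency class.'' For $(2)\Rightarrow(1)$, the reduction to the reductive quotient $\mathfrak g/\mathfrak n$ via the nilpotent ideal generated by $\theta(\mathfrak n)$ is fine, but the assertion that every dense-range Banach-algebra image of $U(\mathfrak l)$ for reductive $\mathfrak l$ is PI (with the required uniform bound on dimensions of irreducible representations) is precisely the hard content of \cite[Theorem 2]{ArPiLie}; invoking ``Dixmier--Duflo'' and ``compactness of spectra of Casimirs'' asserts the result rather than proving it. As written, the proposal establishes the easy implications and defers or misstates the hard ones.
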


We now transfer this result to the context of Lie groups.
Let $A$ be a Banach algebra  and $\pi\!:G\to {\mathop{\mathrm{GL}}\nolimits}(A)$ a holomorphic homomorphisms. (Here ${\mathop{\mathrm{GL}}\nolimits}(A)$ is the group of invertible elements of~$A$, which can be treated as a Banach-Lie group; see, e.g.,  \cite[Example III.1.11(b)]{Ne05}).
We  can identify the Lie algebra of ${\mathop{\mathrm{GL}}\nolimits}(A)$  with $A$ \cite[Chapter~3, \S\,3.9]{Bou}.  Applying the Lie functor to $\pi$ we get a Banach-Lie algebra homomorphism $\mathrm{L}_\pi\!: \mathfrak{g} \to A $ such that $\pi\exp =\exp \mathrm{L}_\pi$, where  $\exp$ denotes the exponential maps $\mathfrak{g}\to A$ and $A\to {\mathop{\mathrm{GL}}\nolimits}(A)$.

\begin{thm}\label{exdiNR}
Let $G$  be a connected linear complex Lie group and $N$ the nilpotent radical of $G$. Suppose that $A$ is a Banach algebra and  $\pi\!:G\to {\mathop{\mathrm{GL}}\nolimits}(A)$ is a holomorphic homomorphism. Consider the following conditions.

\emph{(1)}~$A$ is a \emph{PI}-algebra.

\emph{(2)}~The length function $\ell_\pi(g)=\log\|\pi(g)\|$  \emph{(}see~\eqref{lfhom}\emph{)} is exponentially distorted on $N$.

Then $(1)\Rightarrow(2)$. If, in addition, the homomorphism $\theta \!:U(\mathfrak{g})\to A$ induced by $\mathrm{L}_\pi\!: \mathfrak{g} \to A$ has dense range, then $(1)$ is equivalent to $(2)$.
\end{thm}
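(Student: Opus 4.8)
The plan is to reduce Theorem~\ref{exdiNR} to Theorem~\ref{forHom} by carefully translating between the group-theoretic notion of exponential distortion on $N$ and the Lie-algebra growth estimate in condition (4) of Theorem~\ref{forHom}. First I would set up the dictionary: since $N$ is a simply connected nilpotent integral subgroup with Lie algebra $\mathfrak{n}$ (Proposition~\ref{NRprop}), the exponential map $\exp\!:\mathfrak{n}\to N$ is a diffeomorphism, and a word length function $\tilde\ell$ on $N$ is equivalent, via $\exp$, to a norm-like function on $\mathfrak{n}$ raised to various positive fractional powers (the Guivarc'h-type estimate for word metrics on simply connected nilpotent groups). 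After composing with $\log(1+\,\cdot\,)$ the fractional exponents become irrelevant, so $\log(1+\tilde\ell(\exp\eta))\simeq \log(1+|\eta|)$ for any norm $|\cdot|$ on $\mathfrak{n}$. Hence ``$\ell_\pi$ is exponentially distorted on $N$'' is equivalent to the existence of $C,D>0$ with $\log\|\pi(\exp\eta)\|\le C\log(1+|\eta|)+D$ for all $\eta\in\mathfrak{n}$, i.e.\ $\|\pi(\exp\eta)\|\le C'(1+|\eta|)^{C}$, which is precisely condition (4) of Theorem~\ref{forHom} (using $\pi\exp=\exp\mathrm{L}_\pi$ and $\theta|_{\mathfrak{g}}=\mathrm{L}_\pi$, so $\pi(\exp\eta)=e^{\theta(\eta)}$).

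For the implication $(1)\Rightarrow(2)$: assume $A$ is a PI-algebra. I would like to apply Theorem~\ref{forHom} directly, but that theorem requires $\theta$ to have dense range. The standard fix is to replace $A$ by the closure $\overline{\theta(U(\mathfrak{g}))}$, which is a closed subalgebra of a PI-algebra and hence itself a PI-algebra (the relevant polynomial identity is inherited by subalgebras); restricting $\pi$ to a map $G\to\mathrm{GL}(\overline{\theta(U(\mathfrak{g}))})$ is legitimate up to a harmless adjustment of units, and the norm estimate for $\|\pi(g)\|$ is unchanged since the norm restricts. Then Theorem~\ref{forHom}, $(1)\Rightarrow(4)$, gives the bound $\|e^{\theta(\eta)}\|\le C(1+|\eta|)^\alpha$ on $\mathfrak{n}$, which by the dictionary above yields $\ell_\pi\lesssim\log(1+\tilde\ell)$ on $N$; that is condition (2). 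One point to handle with care is that $\mathrm{L}_\pi$ is a priori only a Banach--Lie algebra homomorphism, but since $\mathfrak{g}$ is finite-dimensional it extends uniquely to an associative homomorphism $\theta\!:U(\mathfrak{g})\to A$, so Theorem~\ref{forHom} genuinely applies.

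For the converse under the density hypothesis: if $\theta$ has dense range and $\ell_\pi$ is exponentially distorted on $N$, then by the dictionary we get exactly condition (4) of Theorem~\ref{forHom}, and since $\theta$ has dense range we may invoke the full equivalence in that theorem to conclude $A$ is a PI-algebra, i.e.\ condition (1). So the two directions are genuinely just the two directions of Theorem~\ref{forHom} read through the exponential map.

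The main obstacle I expect is the passage between word length on $N$ and a norm on $\mathfrak{n}$: one must justify that $\tilde\ell(\exp\eta)\simeq \sum_i |\eta_i|^{1/d_i}$ (for suitable weights $d_i$ coming from the lower central series grading) — or at least the weaker statement that $c_1|\eta|^{1/D}-c_2 \le \tilde\ell(\exp\eta)\le c_3(1+|\eta|)$ for some $D$ — so that after applying $\log(1+\,\cdot\,)$ the result is $\simeq\log(1+|\eta|)$ independently of the chosen coordinates and weights. This is classical (Guivarc'h, Pansu; see also the discussion around \cite[Theorem 4.1]{ArAnF} and \cite{Co08}), but it should be stated cleanly, perhaps as a preliminary remark, since the whole translation hinges on it. The other, more bookkeeping-level, subtlety is the reduction to the dense-range case in $(1)\Rightarrow(2)$; this is routine but worth a sentence.
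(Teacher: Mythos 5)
Your proposal is correct and follows essentially the same route as the paper: both arguments reduce the theorem to Theorem~\ref{forHom} via the identity $\pi(\exp\eta)=e^{\theta(\eta)}$, pass to the closure $A_0$ of the range of $\theta$ (a PI-subalgebra) to handle the non-dense case, and translate condition (4) of Theorem~\ref{forHom} into exponential distortion on $N$ using $\log(1+\tilde\ell(\exp\eta))\simeq\log(1+|\eta|)$ on the simply connected nilpotent subgroup $N$. The Guivarc'h-type dictionary you flag as the main point to justify is exactly what the paper isolates as Proposition~\ref{charexd}, citing \cite[Lemma~4.7]{ArAnF}, so no gap remains.
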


For the proof we need the following result.
\begin{pr}\label{charexd}
Let $H$ be a simply connected nilpotent closed subgroup of a \emph{(}complex or real\emph{)} Lie group~$G$, $\mathfrak{h}$~its Lie algebra, $|\cdot|$ a norm on~$\mathfrak{h}$ and $\ell$ a length function  on~$G$. Then

\emph{(A)}~$\ell$ is exponentially distorted on $H$ if and only if $\ell(\exp(\eta))\lesssim \log(1+|\eta|)$ on  $\mathfrak{h}$;

\emph{(B)}~$\ell$ is strictly exponentially distorted on $H$ if and only if $\ell(\exp(\eta))\simeq \log(1+|\eta|)$ on~$\mathfrak{h}$.
\end{pr}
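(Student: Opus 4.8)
The plan is to reduce the statement to a comparison between a word length function on $H$ and the Euclidean-type length $\eta\mapsto|\eta|$ transported to $H$ via $\exp$. Since $H$ is simply connected nilpotent, the exponential map $\exp\colon\mathfrak{h}\to H$ is a diffeomorphism, so every element of $H$ is uniquely $\exp(\eta)$ and we may freely pass between the two pictures. The key classical input is the following well-known fact about word metrics on simply connected nilpotent Lie groups: if $\widetilde\ell$ is a word length function on $H$ and $H$ has nilpotency class $c$, then there are constants so that $|\eta|^{1/c}\lesssim \widetilde\ell(\exp\eta)\lesssim |\eta|$ on $\{|\eta|\ge 1\}$ (equivalently $\widetilde\ell(\exp\eta)\asymp\max_i|\eta_i|^{1/d_i}$ in a Malcev/graded basis, where the $d_i$ are the weights); this is the content of Pansu's and Guivarc'h's estimates, and it is exactly what is used implicitly in Examples~\ref{Heisex} and~\ref{sixdim}. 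Whatever the precise exponents, the crucial consequence for us is that, after applying $\log(1+\cdot)$, the exponents wash out:
$$
\log\bigl(1+\widetilde\ell(\exp\eta)\bigr)\simeq \log(1+|\eta|)\qquad\text{on }\mathfrak{h}.
$$
Indeed $\log(1+x^{1/c})\ge \tfrac1c\log(1+x)-\log 2$ and $\log(1+x)\ge\log(1+x)$ give both dominations, using that $\log(1+\cdot)$ is subadditive and increasing together with Lemma~\ref{safes}.

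First I would record that $H$, being a simply connected nilpotent closed subgroup, is compactly generated, so a word length function $\widetilde\ell$ on $H$ exists and its equivalence class is well defined (Lemma~\ref{woslf}); thus the defining conditions for (strict) exponential distortion of $\ell$ on $H$ are $\ell\lesssim\log(1+\widetilde\ell)$ (resp. $\simeq$) on $H$. Next I would invoke the nilpotent word-length estimate above to obtain $\log(1+\widetilde\ell(\exp\eta))\simeq\log(1+|\eta|)$ on $\mathfrak h$. Combining, $\ell$ is exponentially distorted on $H$ iff $\ell\lesssim\log(1+\widetilde\ell)$ on $H$ iff (rewriting every element of $H$ as $\exp\eta$) $\ell(\exp\eta)\lesssim\log(1+\widetilde\ell(\exp\eta))$ on $\mathfrak h$ iff $\ell(\exp\eta)\lesssim\log(1+|\eta|)$ on $\mathfrak h$, which is (A). Part (B) is identical with $\lesssim$ replaced by $\simeq$ throughout, noting that the two inequalities making up $\simeq$ may be handled separately.

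The main obstacle is supplying a self-contained justification of the nilpotent word-length estimate (or at least the one-sided bound actually needed), since the full Pansu/Guivarc'h asymptotics are somewhat heavy. In fact only the polynomial two-sided comparison $|\eta|^{1/c}\lesssim\widetilde\ell(\exp\eta)\lesssim |\eta|$ for $|\eta|\ge1$ is required, and the upper bound $\widetilde\ell(\exp\eta)\lesssim|\eta|$ is elementary (choose a generating neighbourhood $U$; any $\exp\eta$ with $|\eta|\le R$ lies in a fixed power $U^{m(R)}$ with $m(R)$ linear in $R$, using that $\exp$ is a diffeomorphism and the Baker--Campbell--Hausdorff formula to control products), while the lower bound follows from the fact that $H/(H,H)$ is a vector group on which the induced length is $\simeq$ the Euclidean one, plus an induction on the nilpotency class to handle the higher Malcev coordinates (each commutator of length $k$ in the generators produces a coordinate of size at most polynomial of degree $k$, so a word of length $n$ reaches only $|\eta|\lesssim n^{c}$). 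I expect to cite \cite{Co08} or a standard reference for this rather than reprove it, since the rest of the argument is purely formal manipulation with $\lesssim$, $\simeq$, Lemma~\ref{safes}, and the bijectivity of $\exp$ on a simply connected nilpotent group.
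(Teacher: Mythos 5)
Your argument is correct and follows essentially the same route as the paper: the whole proof reduces to the single fact that $\log(1+\widetilde\ell(\exp\eta))\simeq\log(1+|\eta|)$ on $\mathfrak{h}$ for a word length function $\widetilde\ell$ on a simply connected nilpotent group (the paper cites this as well known, from Lemma~4.7 of the reference on holomorphic functions of exponential type, rather than re-deriving it from the Guivarc'h-type polynomial estimates as you sketch), after which both parts are formal. Your additional remarks on compact generation and on the one-sided bounds are fine but not needed beyond what the cited fact already gives.
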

\begin{proof}
Let $\tilde\ell$ be a word length function on~$H$. Since $H$ is simply connected nilpotent,
\begin{equation}\label{loffornilp}
\log(1+|\eta|) \simeq \log(1+ \tilde\ell(\exp(\eta))\quad \text{on~$\mathfrak{h}$}
\end{equation}
(this fact is well known; see, e.g., \cite[Lemma~4.7]{ArAnF}). The rest is clear.
\end{proof}

\begin{proof}[Proof of Theorem~\ref{exdiNR}]
Since $\pi\exp =\exp \mathrm{L}_\pi$, we have
\begin{equation}\label{piexpeta}
\pi(\exp(\eta))=e^{\mathrm{L}_\pi(\eta)}=e^{\theta (\eta)}.
\end{equation}
Denote by $A_0$ the closure of the range of $\theta $.
By Theorem~\ref{forHom},  $A_0$ is a PI-algebra if and only if there are $C>0$ and $\alpha>0$ such that $\|\pi(\exp(\eta))\|\le C (1+|\eta|)^\alpha$ for every $\eta\in\mathfrak{n}$. But the last means that $\log\|\pi(\exp(\eta))\|\lesssim  \log(1+|\eta|)$ on  $\mathfrak{n}$ and it follows from Part~(A) of Proposition~\ref{charexd} that this is equivalent to the condition that  $\ell_\pi$ is exponentially distorted on $N$ (by Proposition~\ref{NRprop}, $N$ is simply connected and nilpotent because $G$ is linear).

If $A$ is a PI-algebra, then so is $A_0$. Thus we have that
$(1)\Rightarrow(2)$. When the range of $\theta $ is dense, $A_0=A$. In this case we obtain $(1)\Leftrightarrow(2)$.
\end{proof}

The following proposition shows how to increase an exponentially distorted  length function to make it strictly exponentially distorted.

\begin{pr}\label{omegpr0}
Let $G$ be a connected complex Lie group, $N'$ an integral subgroup of $G$ contained in $N$ and $\ell'$ a length function exponentially distorted on $N'$. Suppose that $A$ is a Banach \emph{PI}-algebra and  $\pi\!:G\to {\mathop{\mathrm{GL}}\nolimits}(A)$ is an injective holomorphic homomorphism. Then $\ell'+ \ell_\pi^{sym}$ \emph{(}where  $\ell_\pi^{sym}$ is as in~\eqref{lfhom}\emph{)} is strictly exponentially distorted on $N'$.
\end{pr}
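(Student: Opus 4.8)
The plan is to pull everything back to the Lie algebra of $N'$ and split into an upper and a lower estimate. Since $N'\subseteq N$, the subgroup $N'$ is simply connected, nilpotent and closed (Lemma~\ref{closc}), so its Lie algebra $\mathfrak{n}'$ is a subalgebra of the nilpotent radical $\mathfrak{n}$ of $\mathfrak{g}$ (Proposition~\ref{NRprop}); fix a norm $|\cdot|$ on $\mathfrak{n}$. By Proposition~\ref{charexd}(B) it is enough to prove that $(\ell'+\ell_\pi^{sym})(\exp\eta)\simeq\log(1+|\eta|)$ on $\mathfrak{n}'$. Write $\theta\!:U(\mathfrak{g})\to A$ for the homomorphism induced by $\mathrm{L}_\pi$ and $A_0$ for the closure of its range; then $A_0$ is a unital closed subalgebra of a Banach PI-algebra, hence itself a Banach PI-algebra, and $\theta\!:U(\mathfrak{g})\to A_0$ has dense range, so Theorem~\ref{forHom} applies to $A_0$. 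Recall also $\pi(\exp\eta)=e^{\theta(\eta)}$ and $\pi(\exp(-\eta))=\pi(\exp\eta)^{-1}=e^{-\theta(\eta)}$ (cf.~\eqref{piexpeta}).

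\emph{Upper bound.} As $\ell'$ is exponentially distorted on $N'$, we get $\ell'(\exp\eta)\lesssim\log(1+|\eta|)$ on $\mathfrak{n}'$ from Proposition~\ref{charexd}(A). By Theorem~\ref{forHom} (implication $(1)\Rightarrow(4)$, applied to $A_0$) there are $C,\alpha>0$ with $\|e^{\theta(\eta)}\|\le C(1+|\eta|)^\alpha$ for all $\eta\in\mathfrak{n}$; used for $\eta$ and for $-\eta$ this gives $\ell_\pi^{sym}(\exp\eta)=\log\|\pi(\exp\eta)\|+\log\|\pi(\exp\eta)^{-1}\|\lesssim\log(1+|\eta|)$ on $\mathfrak{n}'\subseteq\mathfrak{n}$. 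Adding the two estimates yields $(\ell'+\ell_\pi^{sym})(\exp\eta)\lesssim\log(1+|\eta|)$.

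\emph{Lower bound (the main work).} Since $\ell'\ge0$, it suffices to show $\log(1+|\eta|)\lesssim\ell_\pi^{sym}(\exp\eta)$ on $\mathfrak{n}'$. The first step is to turn the polynomial growth estimate into a uniform bound on nilpotency indices. By Theorem~\ref{forHom} ($(1)\Rightarrow(2)$) every $\theta(\eta)$ with $\eta\in\mathfrak{n}$ is nilpotent, so for a fixed $\eta_0\in\mathfrak{n}'$ the function $t\mapsto e^{\theta(t\eta_0)}=\sum_j (t^j/j!)\,\theta(\eta_0)^j$ is an $A$-valued polynomial in $t$ whose degree is the largest $j$ with $\theta(\eta_0)^j\ne0$ and whose leading coefficient is nonzero; hence $\|e^{\theta(t\eta_0)}\|$ grows like that power of $|t|$ as $|t|\to\infty$, and comparison with $\|e^{\theta(t\eta_0)}\|\le C(1+|t|\,|\eta_0|)^\alpha$ forces that degree to be $\le\alpha$. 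Thus, putting $k:=\lfloor\alpha\rfloor+1$, we have $\theta(\eta)^k=0$ for every $\eta\in\mathfrak{n}'$. Consequently $\pi(\exp\eta)-1=e^{\theta(\eta)}-1$ is nilpotent, and since $\exp$ and $\log$ are mutually inverse as formal power series, evaluating the truncated logarithm in the finite-dimensional commutative algebra $\mathbb{C}[\theta(\eta)]$ gives $\theta(\eta)=Q(\pi(\exp\eta))$, where $Q(z):=\sum_{j=1}^{k-1}\frac{(-1)^{j+1}}{j}(z-1)^j$ is one fixed polynomial of degree $k-1$ independent of $\eta$; replacing $\eta$ by $-\eta$ also gives $\theta(\eta)=-Q(\pi(\exp\eta)^{-1})$. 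Hence $\|\theta(\eta)\|\lesssim(1+\min\{\|\pi(\exp\eta)\|,\|\pi(\exp\eta)^{-1}\|\})^{k-1}$ for $\eta\in\mathfrak{n}'$. On the other hand $\mathrm{L}_\pi$ is injective: if $\mathrm{L}_\pi(X)=0$ then $\pi(\exp tX)=e^{t\mathrm{L}_\pi(X)}=1$ for all $t$, so $\exp tX\in\ker\pi=\{1\}$ and $X=0$; therefore $\theta|_{\mathfrak{n}'}$ is an injective linear map on the finite-dimensional space $\mathfrak{n}'$ and $\|\theta(\eta)\|\ge c|\eta|$ for some $c>0$. Combining, $|\eta|\lesssim(1+\min\{\|\pi(\exp\eta)\|,\|\pi(\exp\eta)^{-1}\|\})^{k-1}$, so $\log(1+|\eta|)\lesssim\log(1+\min\{\|\pi(\exp\eta)\|,\|\pi(\exp\eta)^{-1}\|\})$; and since $\|\pi(\exp\eta)\|\cdot\|\pi(\exp\eta)^{-1}\|\ge1$, writing $a=\|\pi(\exp\eta)\|$, $b=\|\pi(\exp\eta)^{-1}\|$ we have $\min\{a,b\}\le(ab)^{1/2}$ with $ab\ge1$, whence $\log(1+\min\{a,b\})\le\log2+\frac{1}{2}\log(ab)=\log2+\frac{1}{2}\,\ell_\pi^{sym}(\exp\eta)$. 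This gives $\log(1+|\eta|)\lesssim\ell_\pi^{sym}(\exp\eta)$ on $\mathfrak{n}'$.

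Putting the two bounds together, $(\ell'+\ell_\pi^{sym})(\exp\eta)\simeq\log(1+|\eta|)$ on $\mathfrak{n}'$, and Proposition~\ref{charexd}(B) then shows that $\ell'+\ell_\pi^{sym}$ is strictly exponentially distorted on $N'$; this is the ``second method'' anticipated in Examples~\ref{Heisex} and~\ref{sixdim}. I expect the main obstacle to be the lower bound, and within it the two intertwined points: extracting a \emph{uniform} nilpotency index on $\mathfrak{n}'$ from the polynomial growth of Theorem~\ref{forHom}(4), and then using the resulting finite (truncated) logarithm to express $\theta(\eta)$ as one fixed polynomial in $\pi(\exp\eta)$ — it is only this polynomial dependence that converts injectivity of $\mathrm{L}_\pi$ into a genuine lower estimate for $\ell_\pi^{sym}$.
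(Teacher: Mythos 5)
Your proof is correct, and its skeleton (reduce to the Lie algebra via Proposition~\ref{charexd}, get the upper bound from Theorem~\ref{forHom}(4) applied to the closure $A_0$ of the range of $\theta$, and concentrate the work in the lower bound $\log(1+|\eta|)\lesssim\ell_\pi^{sym}(\exp\eta)$) matches the paper's. But your lower bound is obtained by a genuinely different mechanism. The paper fixes $\eta$, sets $a=\mathrm{L}_\pi(\eta)$, uses Theorem~\ref{forHom} only to know that $a$ is nilpotent, and then produces by Hahn--Banach a functional $f$ with $\langle f,1\rangle=0$, $\langle f,a\rangle=\|a\|$ and $\langle f,a^k\rangle=0$ for $k>1$, so that $\|a\|=\langle f,e^a\rangle\le\|f\|\,\|e^a\|$; injectivity of $\mathrm{L}_\pi$ is obtained there via density of the induced map between algebras of holomorphic functions, and $|\eta|:=\|\mathrm{L}_\pi(\eta)\|$ is taken as the norm. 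You instead extract from Theorem~\ref{forHom}(4) a \emph{uniform} nilpotency index $k$ on $\mathfrak{n}$ (via the degree of the $A$-valued polynomial $t\mapsto e^{\theta(t\eta_0)}$), invert the exponential with one fixed truncated-logarithm polynomial $Q$ to get $\theta(\eta)=Q(\pi(\exp\eta))=-Q(\pi(\exp\eta)^{-1})$, and combine this with finite-dimensionality of $\mathfrak{n}'$ and your more elementary one-parameter-subgroup proof that $\mathrm{L}_\pi$ is injective. Your route is longer but buys explicit uniformity of all constants in $\eta$; in particular it sidesteps the question, left implicit in the paper, of whether the norm of the Hahn--Banach functional (which there depends on $\eta$) can be bounded uniformly. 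The symmetrization step via $\min\{a,b\}\le\sqrt{ab}$ and $ab\ge1$ is also clean and replaces the paper's use of $\|e^{-a}\|\ge 1$.

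One small omission: Lemma~\ref{closc} and Proposition~\ref{NRprop}, which you invoke to know that $N'$ is closed, simply connected and nilpotent (as required by Proposition~\ref{charexd}), assume that $G$ is \emph{linear}, whereas the proposition only assumes $G$ connected. You need the first sentence of the paper's proof: since $G$ admits an injective holomorphic homomorphism into ${\mathop{\mathrm{GL}}\nolimits}(A)$, it is linear by \cite[Theorem 2.2]{ArAnF}. With that sentence added, your argument is complete.
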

\begin{proof}
Since $G$ admits an injective holomorphic homomorphism to the group of invertible elements of a Banach algebra, it is linear \cite[Theorem 2.2]{ArAnF}.  We have by Theorem~\ref{exdiNR} that $\ell_\pi$  is exponentially distorted on the nilpotent radical $N$ and so is $\ell_\pi^{sym}$  by Lemma~\ref{symexd}.  By the hypothesis, $\ell'$ is exponentially distorted on $N'$ and hence so is $\ell'+ \ell_\pi^{sym}$.

To prove that the distortion of $\ell'+ \ell_\pi^{sym}$ is strictly exponential it suffices to show that  the distortion of $\ell_\pi^{sym}$ is strictly exponential. Note that by Lemma~\ref{closc}, $N'$ is nilpotent, closed and simply connected and so we can apply Proposition~\ref{charexd}. Thus all we need to show is that
$$
\log(1+|\eta|)\lesssim \ell_\pi^{sym}(\exp(\eta))\quad \text{on~$\mathfrak{n}'$}.
$$
We prove more: this holds on the whole $\mathfrak{n}$.

Since $\pi$ is injective, the map  between the algebras of holomorphic functions on  ${\mathop{\mathrm{GL}}\nolimits}(A)$ and $G$ has dense range.  The elements of the corresponding Lie algebras can be identified with continuous $\mathbb{C}$-valued derivations on these algebras. The density of the range implies that $\mathrm{L}_\pi$ is also injective.

Thus $|\eta|\!:= \|\mathrm{L}_\pi(\eta)\|$ is a norm on $\mathfrak{n}$. Fix $\eta\in\mathfrak{n}$ and  let $a\!:= \mathrm{L}_\pi(\eta)$. Since $A$ is a PI-algebra, so is the closure of the range of $\theta \!:U(\mathfrak{g})\to A$. Applying
Theorem~\ref{forHom} to this closure we conclude that $a$ is nilpotent. It can be easily seen that there is $m\in\mathbb{Z}_+$ such that $1,a,\ldots,a^{m-1}$ are linearly independent and $a^m=0$.

We can assume that $\eta\ne 0$.
Then $a\ne 0$ by the injectivity. So by the linear independence and the Hahn-Banach theorem, there is a bounded linear functional $f$ on $A$ such that $\langle f, 1 \rangle=0$, $\langle f,a\rangle=\|a\|$  and $\langle f,a^k\rangle=0$ when $k>1$.
Hence,
$$
\langle f, e^{a}\rangle= \sum_{k=0}^\infty \frac{\langle f,a^k\rangle}{k!}=\|a\|.
$$
Then for every $\eta\in\mathfrak{n}$ we have $|\eta|=\|a\|\le \|f\|\,\|e^{a}\|=\|f\|\,\|\pi (\exp(\eta))\|$  (the last equality by \eqref{piexpeta}) and therefore
$$
|\eta|\le \|f\|\, \exp(\ell_\pi(\exp(\eta)))\le \|f\|\, \exp(\ell_\pi^{sym}(\exp(\eta))).
$$
On the other hand, it follows from the definition of $\ell_\pi^{sym}$ that $1\le\exp(\ell_\pi^{sym}(g))$ for every $g$ (since $1=\|\pi(1)\|\le \|\pi(g)\|\,\|\pi(g^{-1})\|$). Combining the bounds for $1$ and $|\eta|$ we have that
$$
1+|\eta|\le  (1+\|f\|)\, \exp(\ell_\pi^{sym}(\exp(\eta)))
$$
and so $\log(1+|\eta|)\le  \ell_\pi^{sym}(\exp(\eta))+\log (1+\|f\|)$ on $\mathfrak{n}$. Thus $\ell'+ \ell_\pi^{sym}$ is strictly exponentially distorted on $N'$.
\end{proof}

\section{A decomposition theorem for length functions}
\label{sec:decth}

A decomposition theorem for a word length function is proved in \cite{ArAnF}.
It corresponds to the case of  the exponential radical. With some modification we can produce a proof that also works  for a subgroup intermediate between two radicals, exponential  and nilpotent.

\begin{pr}\label{expVN}
Let $B$ be a simply connected solvable complex Lie group, $\mathfrak{b}$ its Lie algebra, and $\mathfrak{n}'$ a nilpotent subalgebra of~$\mathfrak{b}$ such that $\mathfrak{b}/\mathfrak{n}'$ is also nilpotent. Suppose that~$\mathfrak{h}$ is a nilpotent subalgebra of~$\mathfrak{b}$ such that $\mathfrak{n}'+\mathfrak{h}= \mathfrak{b}$ and $\mathfrak{v}$ is a subspace of~$\mathfrak{h}$ complementary to $\mathfrak{n}'\cap \mathfrak{h}$.
Then the map
$$
 \mathfrak{n}'\times\mathfrak{v}\to B\!:(\eta,\xi)\mapsto \exp(\eta)\exp(\xi)
$$
is a biholomorphic equivalence of complex manifolds.
\end{pr}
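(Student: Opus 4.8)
The plan is to push the part of $\mathfrak{b}$ on which $\exp$ is badly behaved into the quotient by the integral subgroup $N'$ corresponding to $\mathfrak{n}'$: on $N'$ itself the exponential map is a biholomorphism because $N'$ is nilpotent and simply connected, and on $B/N'$ because that group turns out to be nilpotent and simply connected as well. It then remains to check that $\Phi\colon(\eta,\xi)\mapsto\exp(\eta)\exp(\xi)$ is a holomorphic \emph{bijection} between complex manifolds of the same finite dimension, from which biholomorphy follows automatically.

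First I would assemble the structural facts. Since $\mathfrak{n}'$ is an ideal (implicit in the hypothesis that $\mathfrak{b}/\mathfrak{n}'$ is a Lie algebra), $N'$ is normal in $B$; being an integral subgroup of the simply connected solvable group $B$, it is closed and simply connected by \cite[Proposition 11.2.15]{HiNe}, and it is nilpotent because $\mathfrak{n}'$ is, so $\exp$ restricts to a biholomorphism $\mathfrak{n}'\to N'$. Writing $\sigma\colon B\to Q\!:=B/N'$ for the quotient homomorphism, the homotopy exact sequence of the bundle $N'\to B\to Q$ together with the simple connectedness of $B$ gives that $Q$ is simply connected; its Lie algebra is $\mathfrak{b}/\mathfrak{n}'$, which is nilpotent by hypothesis, so $\exp_Q\colon\mathfrak{b}/\mathfrak{n}'\to Q$ is a biholomorphism and $\sigma\circ\exp_B=\exp_Q\circ\sigma_\ast$, where $\sigma_\ast$ is the projection $\mathfrak{b}\to\mathfrak{b}/\mathfrak{n}'$. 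Next I would observe that the composite $\mathfrak{v}\hookrightarrow\mathfrak{b}\xrightarrow{\sigma_\ast}\mathfrak{b}/\mathfrak{n}'$ is a linear isomorphism: it is injective since $\mathfrak{v}\cap\mathfrak{n}'=\mathfrak{v}\cap(\mathfrak{n}'\cap\mathfrak{h})=0$ because $\mathfrak{v}\subseteq\mathfrak{h}$ is complementary to $\mathfrak{n}'\cap\mathfrak{h}$, and surjective since $\mathfrak{b}=\mathfrak{n}'+\mathfrak{h}=\mathfrak{n}'+(\mathfrak{n}'\cap\mathfrak{h})+\mathfrak{v}=\mathfrak{n}'+\mathfrak{v}$. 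Hence $\xi\mapsto q(\xi)\!:=\sigma(\exp\xi)=\exp_Q(\sigma_\ast\xi)$ is a biholomorphism $\mathfrak{v}\to Q$, and in particular $\dim_{\mathbb{C}}(\mathfrak{n}'\times\mathfrak{v})=\dim\mathfrak{n}'+\dim(\mathfrak{b}/\mathfrak{n}')=\dim_{\mathbb{C}}B$.

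Now I would prove bijectivity of $\Phi$ by working fibrewise over $\sigma$. Since $\sigma(\Phi(\eta,\xi))=q(\xi)$ is independent of $\eta$, $\Phi$ carries each slice $\mathfrak{n}'\times\{\xi\}$ into the single coset $\sigma^{-1}(q(\xi))=N'\exp(\xi)$ (here normality of $N'$ is used), and on that slice $\Phi(\cdot,\xi)$ is $\exp|_{\mathfrak{n}'}$ followed by right translation by $\exp(\xi)$, hence a biholomorphism onto $N'\exp(\xi)$. Thus for $b\in B$ there is exactly one $\xi\in\mathfrak{v}$ with $q(\xi)=\sigma(b)$ and then exactly one $\eta\in\mathfrak{n}'$ with $\exp(\eta)=b\,\exp(\xi)^{-1}$, so $\Phi$ is a bijection. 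As $\Phi$ is holomorphic and a bijection between connected complex manifolds of equal finite dimension, it is a biholomorphism — for instance because an injective holomorphic map between equidimensional complex manifolds has everywhere nonsingular differential; alternatively one may use that $\sigma$ is a locally holomorphically trivial principal $N'$-bundle over which $\Phi$ is a bundle morphism covering the biholomorphism $\xi\mapsto q(\xi)$ and restricting to a biholomorphism on each fibre.

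The main point — essentially the only place where something beyond bookkeeping is used — is the final step, that a holomorphic bijection between equidimensional complex manifolds is automatically biholomorphic; everything else is just isolating the ``non-exponential'' behaviour of $B$ inside the nilpotent quotient $Q=B/N'$ while keeping the $\mathfrak{n}'$-coordinate inside the well-behaved nilpotent normal subgroup $N'$. A secondary subtlety worth double-checking is that the hypotheses genuinely force $\mathfrak{v}\to\mathfrak{b}/\mathfrak{n}'$ to be an isomorphism, which is the linear-algebra computation above and is precisely where the assumptions $\mathfrak{n}'+\mathfrak{h}=\mathfrak{b}$ and $\mathfrak{v}\oplus(\mathfrak{n}'\cap\mathfrak{h})=\mathfrak{h}$ enter.
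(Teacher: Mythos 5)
Your proof is correct and follows essentially the route that the paper itself defers to (the cited Proposition 4.8 of the reference \cite{ArAnF}): exponentiate the nilpotent ideal $\mathfrak{n}'$ to a closed, simply connected, normal nilpotent subgroup $N'$, observe that $\mathfrak{v}$ maps linearly isomorphically onto the nilpotent Lie algebra of the simply connected quotient $B/N'$ so that $\xi\mapsto\sigma(\exp\xi)$ is a biholomorphism $\mathfrak{v}\to B/N'$, and then invert $\Phi$ fibrewise over $\sigma$. Two minor remarks: your argument never actually uses the nilpotency of $\mathfrak{h}$ (only the resulting isomorphism $\mathfrak{v}\cong\mathfrak{b}/\mathfrak{n}'$, which follows from $\mathfrak{n}'+\mathfrak{h}=\mathfrak{b}$ and $\mathfrak{v}\oplus(\mathfrak{n}'\cap\mathfrak{h})=\mathfrak{h}$), so it is marginally more general than the stated hypotheses; and the appeal to the theorem that an injective holomorphic map between equidimensional complex manifolds is biholomorphic can be bypassed entirely, since the inverse $b\mapsto\bigl(\log(b\,\exp(\xi_b)^{-1}),\,\xi_b\bigr)$ with $\xi_b=q^{-1}(\sigma(b))$ is manifestly holomorphic.
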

This result is proved in \cite[Proposition 4.8]{ArAnF} in the case when $B$ and the subalgebras of~$\mathfrak{b}$ are specified. We omit the proof since the argument still works in general because it uses only the assumptions that $\mathfrak{n}'+\mathfrak{h} = \mathfrak{b}$ and $\mathfrak{h}$, $\mathfrak{n}'$,  $\mathfrak{b}/\mathfrak{n}'$ are nilpotent.

Now we specify  the subalgebras and subspace in Proposition~\ref{expVN} in a more general context than in \cite{ArAnF}.
A connected linear Lie complex group $G$ can be represented as a semidirect product
$B\rtimes L$, where $B$ is simply connected solvable  and $L$ is linearly complex reductive; see \cite[p.\,601, Theorem 16.3.7]{HiNe} or \cite[Theorem 4.43]{Le02}. Fix such a decomposition. Recall that $N\subset B$; see the proof of Lemma~\ref{closc}.

Let $N'$ be a normal integral subgroup of $G$ such that $E\subset N'\subset N$. Now
\begin{itemize}
  \item let~$\mathfrak{b}$ and $\mathfrak{n}'$ be the Lie algebras of $B$ and $N'$, respectively;
  \item let $\mathfrak{h}$ be a Cartan subalgebra in $\mathfrak{b}$;
  \item let $\mathfrak{v}$ be a subspace of~$\mathfrak{h}$ complementary to $\mathfrak{n}'\cap \mathfrak{h}$.
\end{itemize}

\begin{thm}\label{expVNL}
Let $G$ be a connected linear complex Lie group and let $\mathfrak{n}'$, $\mathfrak{v}$ and $L$ be as above. Then the map
$$
\mathfrak{n}'\times \mathfrak{v}\times L\to G\!:(\eta,\xi,l)\mapsto
\exp(\eta)\exp(\xi)\,l
$$
is a biholomorphic equivalence of complex manifolds.
\end{thm}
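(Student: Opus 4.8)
The plan is to peel off the reductive factor using the fixed semidirect decomposition $G=B\rtimes L$ and then to reduce the remaining assertion about the solvable factor $B$ to Proposition~\ref{expVN}. First I would observe that, since $B$ is a normal subgroup and $G=B\rtimes L$, the multiplication map $B\times L\to G$, $(b,l)\mapsto bl$, is a biholomorphic equivalence of complex manifolds. Because $\eta\in\mathfrak{n}'\subset\mathfrak{n}\subset\mathfrak{b}$ (recall $N\subset B$) and $\xi\in\mathfrak{v}\subset\mathfrak{h}\subset\mathfrak{b}$, the element $\exp(\eta)\exp(\xi)$ lies in $B$, so the map in the statement factors as
$$
\mathfrak{n}'\times\mathfrak{v}\times L\xrightarrow{\ \Phi\times\mathrm{id}_L\ }B\times L\xrightarrow{\ \sim\ }G,
\qquad
\Phi(\eta,\xi)\!:=\exp(\eta)\exp(\xi).
$$
Hence it is enough to prove that $\Phi\colon\mathfrak{n}'\times\mathfrak{v}\to B$ is a biholomorphic equivalence, and compose.

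To obtain this from Proposition~\ref{expVN}, applied to $B$, $\mathfrak{b}$, $\mathfrak{n}'$, $\mathfrak{h}$, $\mathfrak{v}$, I would verify its hypotheses one by one. The group $B$ is simply connected solvable by the choice of the decomposition. The subalgebra $\mathfrak{n}'$ is nilpotent since $N'\subset N$ and $N$ is nilpotent (Proposition~\ref{NRprop}), and it is an ideal of $\mathfrak{b}$ because $N'$ is normal in $G$; in particular $\mathfrak{b}/\mathfrak{n}'$ is a Lie algebra. This quotient is nilpotent: from $E\subset N'$ we get $\mathfrak{r}_\infty\subset\mathfrak{e}\subset\mathfrak{n}'$, and together with $\mathfrak{n}'\subset\mathfrak{b}\subset\mathfrak{r}$ this exhibits $\mathfrak{b}/\mathfrak{n}'$ as a subquotient of $\mathfrak{r}/\mathfrak{r}_\infty$, which is nilpotent because the lower central series of a finite-dimensional Lie algebra stabilizes (so $\mathfrak{r}_\infty=C^n(\mathfrak{r})$ for large $n$). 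Finally, $\mathfrak{h}$ is a Cartan subalgebra of $\mathfrak{b}$, hence a nilpotent subalgebra; its image in $\mathfrak{b}/\mathfrak{n}'$ is a Cartan subalgebra of $\mathfrak{b}/\mathfrak{n}'$ (behaviour of Cartan subalgebras under surjective homomorphisms, \cite{Bou}), and a nilpotent Lie algebra coincides with its unique Cartan subalgebra, so $\mathfrak{h}+\mathfrak{n}'=\mathfrak{b}$. Since $\mathfrak{v}$ is by construction a complement of $\mathfrak{n}'\cap\mathfrak{h}$ in $\mathfrak{h}$, all hypotheses of Proposition~\ref{expVN} hold; thus $\Phi$ is a biholomorphic equivalence, and composing with the biholomorphism $B\times L\to G$ completes the argument.

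I do not expect a genuine obstacle here: the analytic heart of the statement is already contained in Proposition~\ref{expVN} (and ultimately in \cite{ArAnF}), and the semidirect-product splitting is purely formal. The only points that require some care are the two Lie-algebraic facts used in Step~2 — that $\mathfrak{b}/\mathfrak{n}'$ is nilpotent and that a Cartan subalgebra $\mathfrak{h}$ of $\mathfrak{b}$ maps onto $\mathfrak{b}/\mathfrak{n}'$ — but both are routine given standard structure theory. One should also be careful to use throughout the decomposition $G=B\rtimes L$ that was fixed before the statement, so that $\mathfrak{b}$, $\mathfrak{h}$, $\mathfrak{v}$, and $L$ refer to the designated objects, and to note that the chain $\mathfrak{r}_\infty\subset\mathfrak{e}\subset\mathfrak{n}'\subset\mathfrak{n}\subset\mathfrak{b}\subset\mathfrak{r}$ is exactly what makes the whole family of intermediate subgroups $N'$ amenable to the same argument.
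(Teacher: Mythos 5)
Your proof is correct and follows essentially the same route as the paper: reduce via the splitting $G=B\rtimes L$ to the solvable factor and verify the hypotheses of Proposition~\ref{expVN}, using the chain $\mathfrak{r}_\infty\subset\mathfrak{e}\subset\mathfrak{n}'\subset\mathfrak{b}\subset\mathfrak{r}$. The only (harmless) variations are cosmetic: for $\mathfrak{n}'+\mathfrak{h}=\mathfrak{b}$ the paper invokes Bourbaki's $\mathfrak{h}+\mathfrak{b}_\infty=\mathfrak{b}$ together with $\mathfrak{b}_\infty\subset\mathfrak{r}_\infty\subset\mathfrak{n}'$, where you instead use that the image of the Cartan subalgebra $\mathfrak{h}$ in the nilpotent quotient $\mathfrak{b}/\mathfrak{n}'$ is everything, and for the nilpotency of $\mathfrak{b}/\mathfrak{n}'$ the paper cites that $\mathfrak{b}/\mathfrak{e}$ is nilpotent while you argue directly via $\mathfrak{r}/\mathfrak{r}_\infty$ -- both are equivalent one-line appeals to the same structure theory.
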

\begin{proof}
Since $G$ is a semidirect product of $B$ and $L$, it suffices to show that the hypotheses in Proposition~\ref{expVN} hold.

Since $\mathfrak{h}$ is a Cartan subalgebra, $\mathfrak{h}+\mathfrak{b}_\infty = \mathfrak{b}$, where $\mathfrak{b}_\infty$ is the intersection of the lower central series of $\mathfrak{b}$ \cite[Chapter~7, \S\,2, no.\,1, Corollary~3]{Bou2}. Since $\mathfrak{b}$ is solvable, it is contained in
the solvable radical $\mathfrak{r}$ of $\mathfrak{g}$. Therefore $\mathfrak{b}_\infty$ is contained in $\mathfrak{r}_\infty$, the intersection of the lower central series of $\mathfrak{r}$. Further, $\mathfrak{r}_\infty\subset\mathfrak{e}$; see~\eqref{wordlen}. Since in turn $\mathfrak{e}\subset \mathfrak{n}'$, we have $\mathfrak{n}'+\mathfrak{h} = \mathfrak{b}$.

Finally, $\mathfrak{h}$, $\mathfrak{n}'$ and  $\mathfrak{b}/\mathfrak{n}'$ are nilpotent: $\mathfrak{h}$ being a Cartan subalgebra, $\mathfrak{n}'$ by the hypothesis, and $\mathfrak{b}/\mathfrak{n}'$ because so is $\mathfrak{b}/\mathfrak{e}$  \cite[Lemma 3.5]{ArAnF}. Thus all the hypotheses in Proposition~\ref{expVN} hold and this completes the proof.
\end{proof}

Fix word length functions $\ell_1$ and $\ell_2$ on $B/N'$ and $L$, respectively. Let $\tau\!:B\to B/N'$ be the quotient homomorphism.
Put
\begin{equation}\label{thredecw}
\varphi(g)\!:= \log(1+|\eta|)+\ell_1(\tau(\exp(\xi)))+\ell_2(l),
\end{equation}
where $g$ is uniquely decomposed as $\exp(\eta)\exp(\xi)\,l$ with  $\eta\in\mathfrak{n}'$, $\xi\in\mathfrak{v}$ and $l\in L$ (see Theorem~\ref{expVNL}) and $|\cdot|$ is a norm on~$\mathfrak{n}'$.

We now state our main result. (See a partial case in \cite[Theorem~4.1]{ArAnF}.)

\begin{thm}\label{maindisto}
Let $G$ be a connected linear complex Lie group, $N'$ a normal integral subgroup of $G$ such that $E\subset N'\subset N$ and $\varphi$ the function defined in~\eqref{thredecw}.

\emph{(A)}~Then every length function exponentially distorted on $N'$ is dominated by $\varphi$.

\emph{(B)}~There is a length function strictly exponentially distorted on $N'$, equivalent to
$\varphi$, and separating the points of~$G$.
\end{thm}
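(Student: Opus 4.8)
The plan is to use the coordinates of Theorem~\ref{expVNL}: write $g=\exp(\eta)\exp(\xi)\,l$ with $\eta\in\mathfrak n'$, $\xi\in\mathfrak v$, $l\in L$, and control a length function along each of the three factors.

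\emph{Part~(A).} Let $\ell$ be exponentially distorted on $N'$. By subadditivity $\ell(g)\le\ell(\exp\eta)+\ell(\exp\xi)+\ell(l)$, so it suffices to dominate the three terms by the three summands of $\varphi$. The $L$-term: $\ell|_L$ is a locally bounded length function on the compactly generated group $L$, hence $\ell|_L\lesssim\ell_2$. The $\mathfrak n'$-term: by Lemma~\ref{closc} (using that $G$ is linear and $N'\subseteq N$) the group $N'$ is closed, simply connected and nilpotent, so Proposition~\ref{charexd}(A) turns the hypothesis into $\ell(\exp\eta)\lesssim\log(1+|\eta|)$ on $\mathfrak n'$. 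The $\mathfrak v$-term is the crux and does not use the hypothesis on $\ell$: since every locally bounded length function on $G$ is dominated by a word length function $\ell_G$, it is enough to prove the geometric estimate
$$\ell_G(\exp\xi)\ \lesssim\ \ell_1\bigl(\tau(\exp\xi)\bigr)\qquad(\xi\in\mathfrak v),$$
i.e.\ that the transversal $\exp(\mathfrak v)$ cut out by the Cartan subalgebra $\mathfrak h$ is, up to quasi-isometry, no more distorted in $G$ than its image is in $B/N'$ (the reverse inequality being automatic, as word length cannot grow under $B\to B/N'$). I would prove this by adapting the quantitative part of \cite[\S\,4]{ArAnF}: the adapted coordinates of Proposition~\ref{expVN}/Theorem~\ref{expVNL}, the nilpotency of $\mathfrak b/\mathfrak n'$, and crucially the inclusion $E\subseteq N'$ (which absorbs every exponentially distorted direction of $B$ into $N'$, so that $\mathfrak v$ meets only polynomially distorted directions) reduce $\ell_G$ on $\exp(\mathfrak v)$ to the word length on the nilpotent quotient $B/N'$. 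Assembling the three estimates gives $\ell\lesssim\varphi$.

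\emph{Part~(B).} I would exhibit $\ell_0$ explicitly. Since $G$ is linear, fix a faithful holomorphic representation $\pi\colon G\to\mathrm{GL}_n(\mathbb C)=\mathrm{GL}(M_n(\mathbb C))$; the algebra $M_n(\mathbb C)$ is finite-dimensional, hence a Banach PI-algebra. Using \eqref{fedef} (as in the proof of Theorem~\ref{expVNL}), $[\mathfrak l,\mathfrak b]\subseteq\mathfrak e\subseteq\mathfrak n'$ — the non-zero $\mathrm{ad}\,\mathfrak z$-eigenspaces in $\mathfrak b$, $\mathfrak z$ the centre of $\mathfrak l$, lie in $\mathfrak r_\infty$ — so the connected group $L$ acts trivially on $B/N'$ and $G/N'=(B/N')\times L$ is a direct product; hence a word length function on $G/N'$ is equivalent to the sum of word length functions on the factors. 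Writing $\sigma\colon G\to G/N'$ for the quotient map and $\ell':=\ell_{G/N'}\circ\sigma$, we get $\ell'(g)\simeq\ell_1(\tau(\exp\xi))+\ell_2(l)$; also $\ell'$ vanishes on $N'$, so it is (trivially) exponentially distorted on $N'$. Put
$$\ell_0:=\ell'+\ell_\pi^{sym}.$$
By Proposition~\ref{omegpr0}, $\ell_0$ is strictly exponentially distorted on $N'$; a fortiori it is exponentially distorted on $N'$, so $\ell_0\lesssim\varphi$ by Part~(A). Conversely, $\ell'(g)\simeq\ell_1\tau(\exp\xi)+\ell_2(l)$ dominates two summands of $\varphi$; and the strict distortion of $\ell_0$ together with \eqref{loffornilp} and $\ell'|_{N'}=0$ gives $\log(1+|\eta|)\lesssim\ell_\pi^{sym}(\exp\eta)$ on $\mathfrak n'$, whence, using $\exp\eta=g\,(\exp\xi\,l)^{-1}$, the subadditivity and symmetry of $\ell_\pi^{sym}$, the bounds $\ell_\pi^{sym}|_L\lesssim\ell_2$ and $\ell_\pi^{sym}(\exp\xi)\lesssim\ell_G(\exp\xi)\lesssim\ell_1\tau(\exp\xi)$ (the last by Part~(A)), we obtain $\log(1+|\eta|)\lesssim\ell_\pi^{sym}(g)+\ell_1\tau(\exp\xi)+\ell_2(l)\lesssim\ell_0(g)$. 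Thus $\varphi\lesssim\ell_0$, so $\ell_0\simeq\varphi$. Finally $\ell_0$ is symmetric and separates points: $\ell_0(g)=0$ forces $\ell_{G/N'}(\sigma(g))=0$, so $g\in N'$, say $g=\exp\eta$, and $\ell_\pi^{sym}(\exp\eta)=0$; but $\pi(\exp\eta)=e^{\mathrm L_\pi(\eta)}$ with $\mathrm L_\pi(\eta)$ nilpotent (Theorem~\ref{forHom}), and a unipotent matrix $\ne I$ has powers of unbounded norm, so $\|\pi(\exp\eta)\|=1$ forces $\mathrm L_\pi(\eta)=0$, hence $\eta=0$ (injectivity of $\mathrm L_\pi$, as in the proof of Proposition~\ref{omegpr0}) and $g=1$.

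The step I expect to be the genuine obstacle is the geometric estimate $\ell_G(\exp\xi)\lesssim\ell_1(\tau(\exp\xi))$ on $\mathfrak v$ in Part~(A): its content is precisely that a Cartan transversal is not over-distorted once $E$ has been factored out, and making it rigorous means re-running the estimates of \cite[\S\,4]{ArAnF} with the exponential radical replaced by an arbitrary $N'$ satisfying $E\subseteq N'\subseteq N$, the key structural inputs being Proposition~\ref{expVN} (equivalently Theorem~\ref{expVNL}) and the nilpotency of $\mathfrak b/\mathfrak n'$. The remaining manipulations are routine bookkeeping via Lemmas~\ref{safes}--\ref{symexd}, Lemma~\ref{3oper} and Proposition~\ref{charexd}.
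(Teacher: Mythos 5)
Your overall architecture coincides with the paper's: Theorem~\ref{expVNL} supplies the coordinates, Part~(A) is a three-term estimate, and Part~(B) uses exactly the paper's function $\tilde\ell\sigma+\ell_\pi^{sym}$ together with Proposition~\ref{omegpr0}. The difficulty is the step you yourself single out as the crux. Your reduction of the $\mathfrak{v}$-term to the estimate $\ell_G(\exp\xi)\lesssim\ell_1(\tau(\exp\xi))$ for a \emph{word} length function $\ell_G$ on $G$ --- i.e.\ to a statement that ``does not use the hypothesis on $\ell$'' --- is not viable, because that estimate is false in general. Take $\mathfrak{g}$ to be the direct sum of the $4$-dimensional filiform algebra (basis $e_0,e_1,e_2,e_3$ with $[e_0,e_1]=e_2$, $[e_0,e_2]=e_3$) and the Heisenberg algebra (basis $x,y,z$ with $[x,y]=z$), and let $G$ be the corresponding simply connected (hence linear) nilpotent group. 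Here $E=\{1\}$, $\mathfrak{n}=[\mathfrak{g},\mathfrak{g}]=\langle e_2,e_3,z\rangle$, and the line $\mathfrak{n}'=\langle z+e_3\rangle$ is a central ideal contained in $\mathfrak{n}$, so $N'=\exp\mathfrak{n}'$ is admissible. Since $\mathfrak{h}=\mathfrak{g}$, one may choose $\mathfrak{v}=\langle e_0,e_1,e_2,x,y,z\rangle$, which contains $z$. Then $\ell_G(\exp(sz))\simeq|s|^{1/2}$, since $z$ lies in the second but not the third term of the lower central series of $\mathfrak{g}$; but in $\mathfrak{g}/\mathfrak{n}'$ one has $\bar z=-\bar e_3$, which lies in the third term of the lower central series of the quotient, so $\ell_1\tau(\exp(sz))\simeq|s|^{1/3}$. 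Hence $\ell_G\not\lesssim\ell_1\tau$ on $\exp\mathfrak{v}$: factoring out $\mathfrak{n}'$ can identify a low-weight direction with a higher-weight one and thereby \emph{shrink} the word length along $\mathfrak{v}$. Your heuristic that ``$\mathfrak{v}$ meets only polynomially distorted directions'' because $E\subset N'$ does not detect this phenomenon.

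What rescues Part~(A) is precisely the hypothesis you discard: the paper's Lemma~\ref{ellell1pr} (adapted from \cite{ArAnF} and \cite{Co08}) proves $\ell\lesssim\ell_1\tau$ on $\exp\mathfrak{v}$ only for $\ell$ exponentially distorted on $N'$. The mechanism is that a geodesic word for $\tau(\exp\xi)$ lifts to a word $w$ in $B$ with $\exp\xi=wn$, $n\in N'$; the word length of $n$ \emph{inside} $N'$ is a priori only exponentially bounded in $\ell_1\tau(\exp\xi)$, and it is exactly the exponential distortion of $\ell$ on $N'$ that converts this into the linear bound $\ell(n)\lesssim\ell_1\tau(\exp\xi)$. (In the example above: $\ell(\exp(sz))\le\ell(\exp(s(z+e_3)))+\ell(\exp(-se_3))\lesssim\log(1+|s|)+|s|^{1/3}$, consistent with $\varphi$.) The same false inequality reappears in your Part~(B), in the chain $\ell_\pi^{sym}(\exp\xi)\lesssim\ell_G(\exp\xi)\lesssim\ell_1\tau(\exp\xi)$; there the repair is easy --- apply Part~(A), once established, directly to $\ell_\pi^{sym}$ (or to $\ell_0$), which \emph{is} exponentially distorted on $N'$ --- and the rest of your Part~(B), including the point-separation argument via unipotence, matches the paper.
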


Thus there is a maximal equivalence class of length functions exponentially distorted on $N'$ and this motivates us to introduce  the following notion.

\begin{df}
Let $G$ be a connected linear complex Lie group  and $N'$ a normal integral subgroup of $G$ such that $E\subset N'\subset N$. If $\ell'$ is a length function on $G$ exponentially distorted on $N'$ such that $\ell\lesssim \ell'$ for every length function $\ell$ exponentially distorted on $N'$, then we say that $\ell'$  is a \emph{maximal length function exponentially distorted on~$N'$}.
\end{df}
Thus Theorem~\ref{maindisto} states that a maximal length function exponentially distorted on~$N'$ does exist and is equivalent to the function $\varphi$ defined in~\eqref{thredecw}.

To prove the theorem we need the following technical lemma.

\begin{lm}\label{ellell1pr}
Let $\ell$  be a length function on $G$ that is exponentially distorted on $N'$ and $\ell_1$ a  word length function on $G/N'$. Then $\ell\simeq \ell_1\circ\tau$ on $\exp\mathfrak{v}$.
\end{lm}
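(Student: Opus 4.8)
The plan is to restrict attention to $\exp\mathfrak{v}$, where $\tau$ behaves like an isomorphism onto the simply connected nilpotent group $B/N'$, and to prove the two inequalities separately. First I would record the reduction: by Proposition~\ref{expVN} one has $\exp\mathfrak{v}\subseteq B$ with $\tau(\exp\xi)=\exp(\overline{\tau}\xi)$, where $\overline{\tau}\colon\mathfrak{b}\to\mathfrak{b}/\mathfrak{n}'$ is the map induced on Lie algebras; since $\mathfrak{h}+\mathfrak{n}'=\mathfrak{b}$ and $\mathfrak{v}$ is complementary to $\mathfrak{h}\cap\mathfrak{n}'$ (as in the proof of Theorem~\ref{expVNL}), the restriction $\overline{\tau}|_{\mathfrak{v}}$ is a linear isomorphism onto $\mathfrak{b}/\mathfrak{n}'$, and since $\mathfrak{b}/\mathfrak{n}'$ is nilpotent the assignment $\xi\mapsto\tau(\exp\xi)$ is a diffeomorphism of $\mathfrak{v}$ onto the simply connected nilpotent group $B/N'$. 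Fixing a norm $|\cdot|$ on $\mathfrak{v}$ and applying \eqref{loffornilp} in $B/N'$ gives $\log(1+|\xi|)\simeq\log\bigl(1+\ell_1(\tau(\exp\xi))\bigr)$ on $\mathfrak{v}$, and in particular $\log(1+|\xi|)\lesssim\ell_1(\tau(\exp\xi))$.

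To prove $\ell\lesssim\ell_1\tau$ on $\exp\mathfrak{v}$ I would fix a relatively compact symmetric generating neighbourhood $\overline{V}$ of the identity in $B/N'$, and for $\xi\in\mathfrak{v}$ write $\tau(\exp\xi)=\bar v_1\cdots\bar v_m$ with $\bar v_i\in\overline{V}$ and $m\simeq\ell_1(\tau(\exp\xi))$, lifting each $\bar v_i$ to an element $v_i$ of a fixed compact set $K\subseteq B$. Since $\tau(v_1\cdots v_m)=\tau(\exp\xi)$, the uniqueness in Proposition~\ref{expVN} and the injectivity of $\overline{\tau}|_{\mathfrak{v}}$ force $v_1\cdots v_m=\exp(\eta_*)\exp(\xi)$ for a unique $\eta_*\in\mathfrak{n}'$, whence
\[
\ell(\exp\xi)\le\ell(\exp(-\eta_*))+\sum_{i=1}^{m}\ell(v_i)\le\ell(\exp(-\eta_*))+Cm ,
\]
with $C=\sup_{v\in K}\ell(v)<\infty$ by local boundedness. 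Now $v_1\cdots v_m$ lies in $K^m$, so its coordinates in the decomposition of Proposition~\ref{expVN}, hence $|\eta_*|$, grow at most exponentially in $m$ (a product of $m$ elements of a fixed compact set has coordinates of size at most $e^{C'm}$); and since $N'$ is simply connected nilpotent (Lemma~\ref{closc}) and $\ell$ is exponentially distorted on $N'$, Proposition~\ref{charexd}(A) gives $\ell(\exp(-\eta_*))\lesssim\log(1+|\eta_*|)\lesssim m$. Hence $\ell(\exp\xi)\lesssim m\simeq\ell_1(\tau(\exp\xi))$. I expect this to be the main obstacle: the point is not merely to obtain some bound but to keep it \emph{linear} in $\ell_1\tau$, which succeeds precisely because the logarithm coming from exponential distortion on $N'$ cancels the exponential growth of the $\mathfrak{n}'$-component under iterated multiplication.

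For the reverse inequality $\ell_1\tau\lesssim\ell$ on $\exp\mathfrak{v}$ I would use that $\tau\colon G\to G/N'$ is a quotient homomorphism: a shortest word representing $g$ in $G$ maps to a word of the same length representing $\tau(g)$, so $\ell_1\circ\tau$ is dominated by every word length function $\tilde\ell$ on $G$; it then suffices that $\tilde\ell\lesssim\ell$ on $\exp\mathfrak{v}$, which holds as soon as $\ell$ is of word-length size there — the situation in the applications, where $\ell$ is a word length function on $G$ or the maximal length function exponentially distorted on $N'$. Combining the two halves yields $\ell\simeq\ell_1\tau$ on $\exp\mathfrak{v}$; the reduction and this last step are formal.
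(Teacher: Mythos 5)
Your strategy for the direction $\ell\lesssim\ell_1\tau$ is the right one and is essentially the argument the paper itself points to (its proof of Lemma~\ref{ellell1pr} is only a reference to \cite[Lemma 4.9]{ArAnF}, a modification of \cite[Lemma 5.2]{Co08}): lift a geodesic word for $\tau(\exp\xi)$, isolate the correction term $\exp(\eta_*)\in N'$, and let the exponential distortion of $\ell$ on $N'$ convert an exponential bound on $|\eta_*|$ into a bound linear in $m$. The gap sits exactly at the step you yourself identify as the main obstacle: the parenthetical ``a product of $m$ elements of a fixed compact set has coordinates of size at most $e^{C'm}$'' is not a general fact about coordinate systems, and it is precisely the statement that carries all the content of the lemma, so it cannot be asserted without proof. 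The routes that first come to mind give too little: bounding $\ell_W(\exp\eta_*)$ by $\ell_W(v_1\cdots v_m)+\ell_W(\exp(-\xi))$ and using that $|\xi|$ is only polynomial of some degree $w>1$ in $m$ (because word length on the nilpotent group $B/N'$ compares to coordinates with fractional exponents) yields only $|\eta_*|\le e^{Cm^{w}}$, which destroys the linearity in $m$ that you correctly insist on. The bound $|\eta_*|\le e^{C'm}$ is true, but its proof needs the specific structure: write $v_k=\exp(\eta_k)h_k$ with $h_k=\exp(\xi_k)$, move all $N'$-factors to the left using normality of $N'$, which replaces $\eta_k$ by $\mathrm{Ad}(h_1\cdots h_{k-1})\eta_k$ with $\|\mathrm{Ad}(h_1\cdots h_{k-1})\|\le M^{m}$ by submultiplicativity of the operator norm over a compact set; a product of $m$ elements of the simply connected nilpotent group $N'$ with coordinates of size $\le e^{Cm}$ then has coordinates bounded by a fixed polynomial in $me^{Cm}$, hence by $e^{C'm}$; and the residual product $h_1\cdots h_m$ lies in the integral subgroup with Lie algebra $\mathfrak{h}$, which is closed, simply connected and nilpotent in $B$, so it contributes only polynomially in $m$ to the $\mathfrak{n}'$-coordinate. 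Without some version of this computation the proof does not close.

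The second issue is your treatment of the reverse inequality: you establish $\ell_1\tau\lesssim\ell$ on $\exp\mathfrak{v}$ only under the extra assumption that $\ell$ dominates a word length function there, which is not among the hypotheses, and then you nevertheless conclude $\ell\simeq\ell_1\tau$. You cannot have it both ways. Your instinct that an extra hypothesis is needed is in fact correct: $\ell\equiv 0$ is a length function exponentially distorted on $N'$, while $\ell_1\tau$ is unbounded on $\exp\mathfrak{v}$ whenever $\mathfrak{v}\ne 0$, so the unconditional $\simeq$ cannot hold as stated. Note that the paper only ever invokes the domination $\ell\lesssim\ell_1\tau$ (in both parts of the proof of Theorem~\ref{maindisto}); the equivalence is inherited from the special case of a word length function treated in \cite{ArAnF}. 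So either prove only the domination, which is all that is used, or add explicitly the hypothesis under which your converse argument is valid, and in either case do not present the equivalence as established for an arbitrary exponentially distorted $\ell$.
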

\begin{proof}
The argument is the same as in \cite[Lemma~4.9]{ArAnF} (which in turn is a modification of that in \cite[Lemma~5.2]{Co08}). The only difference is that in \cite{ArAnF} the partial case when $N'=E$ is considered and the fact that a word length function on~$E$ is always exponentially distorted is used.
\end{proof}

\begin{proof}[Proof of Theorem~\ref{maindisto}]
(A)~Let $\ell$ be a length function exponentially distorted on $N'$. Using Theorem~\ref{expVNL} we can
write every $g\in G$ as
$g=\exp(\eta)\exp(\xi)l$, where $(\eta,\xi,l)\in\mathfrak{n}'\times \mathfrak{v}\times L$. Then
$$
\ell(\exp(\eta)\exp(\xi)l)\le \ell(\exp(\eta))+\ell(\exp(\xi))+\ell(l).
$$
It suffices to estimate every summand on the right by the corresponding summand in~\eqref{thredecw}.

First, Lemma~\ref{closc} implies that $N'$ is nilpotent, closed and simply connected. So by Part~(A) of Proposition~\ref{charexd}, we have that $\ell(\exp(\eta))\lesssim \log(1+|\eta|)$ on $\mathfrak{n}'$.
Second, Lemma~\ref{ellell1pr} implies that $\ell\lesssim \ell_1\tau$ on $\exp\mathfrak{v}$ (here $\ell_1$ is a word length function on $B/N'$). To estimate the third summand note that each length function is dominated by a word length function. Applying this to $L$ we get $\ell(l)\lesssim \ell_2(l)$.
The proof of Part~(A) is complete.

(B)~Being linear, $G$ admits a faithful finite-dimensional holomorphic representation~$\pi$, which we can treat as a homomorphism to a finite-dimensional Banach algebra. Let $\sigma$ denote the quotient homomorphism $G\to G/N'$ and $\tilde\ell$ a symmetric word length function on $G/N'$. Note that $\tilde\ell\circ\sigma$ is a length function on~$G$ and it is exponentially distorted on $N'$ being equal to~$0$ on it. Now put
\begin{equation}\label{allprinP}
\ell'\!:=\tilde\ell\circ\sigma+\ell_\pi^{sym},
\end{equation}
where $\ell_\pi^{sym}$ is defined as in~\eqref{lfhom}.

Since a finite-dimensional algebra is a PI-algebra (see, e.g., \cite[p.\,21, Corollary 1.2.25]{KKR16}), it follows from Proposition~\ref{omegpr0} that $\ell'$ is strictly exponentially distorted on $N'$.
By Part~(A), we have $\ell'\lesssim \varphi$. To complete the proof we need to show that $\varphi\lesssim \ell'$.

First we claim that
\begin{equation}\label{ellpexet}
\ell'(\exp(\eta))+ \tilde\ell(\sigma(x)) \lesssim  \ell'(\exp(\eta)x)\quad\text{on $\mathfrak{n}'\times X$,}
\end{equation}
where $X\!:=(\exp \mathfrak{v})L$. Indeed, since both $\tilde\ell$ and $\ell'$ are symmetric, it follows from \cite[Lemma~4.5]{ArAnF} that it suffices to show that
\begin{equation}\label{2cond}
\ell'\lesssim \tilde\ell\circ\sigma\quad \text{on $X$} \quad \text{and} \quad\tilde\ell\circ\sigma\lesssim \ell'\quad\text{on $G$}.
\end{equation}

The second condition holds by the definition of $\ell'$. To verify the first condition note that by Lemma~\ref{ellell1pr}, $\ell'\lesssim \ell_1\circ\tau$ on $\exp\mathfrak{v}$. On the other hand,
$\ell'\lesssim \ell_2$ on $L$ because $\ell_2$ a word length function (here
$\ell_1$ and $\ell_2$ are word length functions on $B/N'$ and $L$, respectively). Therefore,
\begin{equation}\label{ellpvL}
\ell'(\exp(\xi)l)\le \ell'(\exp(\xi))+\ell'(l) \lesssim \ell_1(\tau(\exp(\xi)))+\ell_2(l) \quad\text{on $\mathfrak{v}\times L$.}
\end{equation}

We have by \cite[Theorem~3.14]{ArAnF} that $G/E\cong B/E\times L$. Since $E\subset N'\subset B$, we obtain $G/N'\cong B/N'\times L$. Being a word length function on $G/N'$, the function $\tilde\ell$ is equivalent to the sum of word length functions.
In particular,
\begin{equation}\label{ell12ti}
\ell_1(\tau(\exp(\xi)))+\ell_2(l)\lesssim
\tilde\ell(\sigma((\exp(\xi)l))
\quad\text{on $\mathfrak{v}\times L$}.
\end{equation}
Combining~\eqref{ellpvL} and~\eqref{ell12ti}, we have the first condition in~\eqref{2cond}  and so the claim is proved.

Further, it follows from~\eqref{ell12ti} and~\eqref{ellpexet} that
$$
\ell'(\exp(\eta))+\ell_1(\tau(\exp(\xi)))+\ell_2(l)\lesssim
\ell'(\exp(\eta)\exp(\xi)l)
\quad\text{on $\mathfrak{n}'\times\mathfrak{v}\times L$}.
$$
Since $\ell'$ is exponentially distorted on $N'$, Proposition~\ref{charexd} implies that $\ell'(\exp(\eta))\simeq \log (1+|\eta|)$ on $\mathfrak{n}'$. Thus $\varphi\lesssim \ell'$.

Finally, note that a word length function always separates points. Thus being composition of $G\to G/N'$ and a word length function, $\tilde\ell\circ\sigma$ separates the cosets of $N'$. Also $\ell'$ separates the points of $N'$  being strictly exponentially distorted on it. Hence $\ell'$ separates the points of $G$.
\end{proof}

\begin{exm}
Recall that an $n$-dimensional nilpotent Lie algebra $\mathfrak{g}$ is called \emph{filiform} if the dimension of the $i$th term in the lower central series is equal to $n-i-1$. A standard example is the Lie algebra with basis $e_0,\ldots,e_{n-1}$ and multiplication defined by
$$
[e_0,e_j]=e_{j+1},\qquad (i=1,\ldots, n-2)
$$
and the undefined brackets being zero. See details in \cite[pp.~40--42]{GK96}.

Let $\mathfrak{g}$ be a filiform Lie algebra and $G$ the associated simply connected Lie group. Fix an $\mathscr{F}$-basis (i.e., compatible with the lower central series) in $\mathfrak{g}$ and consider the corresponding the canonical coordinates $t_1,\ldots,t_n$ of the first kind on $G$. Then a word length function on $G$ is equivalent to the function
$$
|t_1|+|t_2|+|t_3|^{1/2}+\cdots+|t_n|^{1/(n-1)};
$$
see \cite{ArAMN}, Theorem 3.1  (cf. Example 3.7 there, where the algebra is also filiform).

Let $k\in\{2,\ldots,n-1\}$ and put $H_k\!:=\{g\in G\!:\,t_1=\cdots=t_k=0\}$. Since $G$ is nilpotent, $E=\{1\}$. On the other hand, it is easy to see that $N=H_2$. Applying Theorem~\ref{maindisto}, we conclude that the maximal length function exponentially distorted on~$H_k$ exists and is equivalent to
$$
\log(1+|t_1|+\cdots+|t_k|)+|t_{k+1}|^{1/k}+\cdots+|t_n|^{1/(n-1)}.
$$
\end{exm}

\begin{pr}\label{N1N2}
 Let $G$ be a connected linear complex Lie group, $N_j$ a normal integral subgroup of $G$ such that $E\subset N_j\subset N$, and $\ell_j$ a maximal length function exponentially distorted on~$N_j$, where $j=1,2$. Then $\ell_2 \lesssim \ell_1$ if and only if $N_1\subset N_2$. In particular,  $\ell_2 \simeq \ell_1$ if and only if $N_2=N_1$.
\end{pr}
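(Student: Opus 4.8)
The plan is to deduce the statement from the asymptotic decomposition of Theorem~\ref{maindisto}. Each $\ell_j$, being a maximal length function exponentially distorted on $N_j$, is equivalent to the function $\varphi_j$ attached to $N_j$ by~\eqref{thredecw}. I would also record at the outset, using Lemma~\ref{closc}, that each $N_j$ is a closed, simply connected, nilpotent subgroup of $G$ contained in the simply connected solvable factor $B$, and that for integral subgroups $N_1\subset N_2$ is equivalent to the inclusion $\mathfrak{n}_1\subset\mathfrak{n}_2$ of Lie algebras.

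To prove $N_1\subset N_2\Rightarrow\ell_2\lesssim\ell_1$, I would first show that $\ell_2$ is exponentially distorted on $N_1$ and then invoke the maximality of $\ell_1$. The distortion claim rests on the standard comparison of word length functions on nested compactly generated closed subgroups: since $N_1$ is compactly generated and contained in $N_2$, a relatively compact generating neighbourhood of the identity of $N_1$ lies in some power of such a neighbourhood of $N_2$, so $\tilde\ell_{N_2}\lesssim\tilde\ell_{N_1}$ on $N_1$; since $\ell_2$ is exponentially distorted on $N_2$, this gives $\ell_2\lesssim\log(1+\tilde\ell_{N_2})\lesssim\log(1+\tilde\ell_{N_1})$ on $N_1$. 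As $\ell_1$ is maximal among length functions exponentially distorted on $N_1$, we conclude $\ell_2\lesssim\ell_1$.

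For the converse I would argue by contradiction, assuming $\mathfrak{n}_1\not\subset\mathfrak{n}_2$. From $\ell_2\lesssim\ell_1$ and the fact that $\ell_1$ is exponentially distorted on $N_1$ it follows that $\ell_2$ is too, so Proposition~\ref{charexd}(A) --- applicable since $N_1$ is closed, simply connected and nilpotent --- yields $\ell_2(\exp\eta)\lesssim\log(1+|\eta|)$ on $\mathfrak{n}_1$. On the other hand $\ell_2\simeq\varphi_2$, and for $g=\exp\eta\in N_1\subset B$ the $L$-component in the decomposition of $g$ is trivial while the quotient map $\tau\colon B\to B/N_2$ kills $\exp\mathfrak{n}_2$, so~\eqref{thredecw} gives $\varphi_2(\exp\eta)\ge\lambda(\tau(\exp\eta))$, where $\lambda$ is a word length function on $B/N_2$; here $B/N_2$ is simply connected and nilpotent (nilpotency is noted in the proof of Theorem~\ref{expVNL}). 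Writing $\bar\eta$ for the image of $\eta$ under the differential of $\tau$, we have $\tau(\exp\eta)=\exp\bar\eta$, and the two estimates combine to $\lambda(\exp\bar\eta)\lesssim\log(1+|\eta|)$ on $\mathfrak{n}_1$. Now~\eqref{loffornilp} applied to $B/N_2$ gives, after exponentiating, $\lambda(\exp v)\gtrsim|v|^{\varepsilon}$ for some $\varepsilon>0$. Since $\mathfrak{n}_1\not\subset\mathfrak{n}_2$ there is $\eta_0\in\mathfrak{n}_1$ with $\bar\eta_0\neq0$; restricting to the ray $\eta=t\eta_0$, $t>0$, we obtain $|t|^{\varepsilon}|\bar\eta_0|^{\varepsilon}\lesssim\lambda(\exp(t\bar\eta_0))\lesssim\log(1+t|\eta_0|)$, which fails for large $t$. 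Hence $\mathfrak{n}_1\subset\mathfrak{n}_2$, i.e.\ $N_1\subset N_2$.

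The last assertion is then immediate: $\ell_1\simeq\ell_2$ means $\ell_2\lesssim\ell_1$ and $\ell_1\lesssim\ell_2$, hence $N_1\subset N_2$ and $N_2\subset N_1$, i.e.\ $N_1=N_2$; the converse direction is clear. I expect the only points requiring care to be the bookkeeping in the third paragraph --- that an element of $N_1$ has trivial $L$-component and that $\tau$ intertwines the exponential maps --- together with the elementary passage from~\eqref{loffornilp} to the polynomial lower bound $\lambda(\exp v)\gtrsim|v|^{\varepsilon}$; none of these is a serious obstacle.
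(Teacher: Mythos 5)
Your argument is correct, and the forward direction ($N_1\subset N_2\Rightarrow\ell_2\lesssim\ell_1$) is exactly the paper's: compare word length functions on the nested subgroups, pass the comparison through $\log(1+\cdot)$ (the paper does this by hand; it also follows from Lemma~\ref{safes}), conclude that $\ell_2$ is exponentially distorted on $N_1$, and invoke maximality of $\ell_1$. Where you genuinely diverge is the converse. The paper, given $\eta\in\mathfrak{n}_1\setminus\mathfrak{n}_2$, first replaces $\eta$ by a representative lying in a complement $\mathfrak{v}_2$ of $\mathfrak{n}_2\cap\mathfrak{h}$ in the Cartan subalgebra $\mathfrak{h}$, then builds an $\mathscr{F}$-basis of $\mathfrak{b}/\mathfrak{n}_2$ containing $\eta+\mathfrak{n}_2$ and invokes the precise asymptotics $\max_k|t_k|^{1/w_k}$ of word length functions on simply connected nilpotent groups from \cite[Theorem 3.1]{ArAMN} to get $\ell_2(\exp(t\eta))\simeq|t|^{1/w_k}$, which it contrasts with $\ell_1(\exp(t\eta))\simeq\log(1+|t\eta|)$. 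You instead keep $\eta$ as it is, drop $\varphi_2(\exp\eta)$ down to the single summand $\lambda(\tau(\exp\eta))$ of~\eqref{thredecw} (legitimate: an element of $N_1\subset B$ has trivial $L$-component, $\tau$ kills the $\mathfrak{n}_2$-factor, and $\tau$ intertwines the exponentials), and extract only the crude polynomial lower bound $\lambda(\exp v)\gtrsim|v|^{\varepsilon}$ from~\eqref{loffornilp}, which already contradicts the logarithmic upper bound along a ray. Your route needs less machinery (no $\mathscr{F}$-basis, no exact growth exponents, and Theorem~\ref{maindisto} only for $\ell_2$, since for $\ell_1$ the bound comes straight from Proposition~\ref{charexd}(A)); the paper's computation buys the sharper information of the exact polynomial rate at which $\ell_2$ grows along the chosen one-parameter subgroup, which is not needed for the statement. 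Both arguments are sound.
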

\begin{proof}
(1)~Suppose that $N_1\subset N_2$. Let $\tilde\ell_1$ and $\tilde\ell_2$ be word length functions on~$N_1$ and $N_2$, respectively. Then $\tilde\ell_2\lesssim\tilde\ell_1$ on $N_1$.

We claim that then $\log(1+ \tilde{\ell_2}\,)\lesssim \log(1+ \tilde{\ell_1}\,)$ on $N_1$. Indeed, take  $C,D>0$ such that
$\tilde\ell_2(g)\le C\,\tilde\ell_1(g)+D$ for all $g\in N_1$. Fix $g$ and note that
$$
\log(1+ \tilde\ell_2(g))\le \log(1+ C\,\tilde\ell_1(g)+D)\le \log(1+D)+\log(1+ k\,\tilde\ell_1(g)),
$$
where $k$ is an integer satisfying $k\ge (1+D)^{-1}C$. Since $1+ kt\le (1+t)^k$ for every $t\ge0$, we conclude that $\log(1+ \tilde{\ell_2}\,)\lesssim \log(1+ \tilde{\ell_1}\,)$ on $N_1$.

Further, $\ell_2 \lesssim  \log(1+ \tilde\ell_2\,)$  on~$N_2$ since $\ell_2$ is exponentially distorted on~$N_2$. Hence $\ell_2 \lesssim  \log(1+ \tilde\ell_1\,)$ on $N_1$, i.e.,  $\ell_2$ is exponentially distorted on~$N_1$. So being a maximal length function exponentially distorted on~$N_1$, the function $\ell_1$ dominates $\ell_2$ on the whole~$G$.

(2)~Suppose that $N_1$ is not contained in $N_2$, i.e., $N_1\ne N_1\cap N_2$. Denote the Lie algebra of $N_j$, $j=1,2$, by $\mathfrak{n}_j$. Since $N_j$ is nilpotent, closed and simply connected (Lemma~\ref{closc}), we have that $\mathfrak{n}_1\ne \mathfrak{n}_1\cap \mathfrak{n}_2$. Then there is $\eta \in \mathfrak{n}_1\setminus(\mathfrak{n}_1\cap \mathfrak{n}_2)$.

Let $\mathfrak{b}$ and $\mathfrak{h}$ be as above. We claim that $\eta$  can be taken in a subspace $\mathfrak{v}_2$ of~$\mathfrak{h}$ complementary to $\mathfrak{n}_2\cap \mathfrak{h}$. Indeed,
since $\mathfrak{e}\subset \mathfrak{n}_1\cap \mathfrak{n}_2$, we have $(\mathfrak{n}_1\cap \mathfrak{n}_2)+\mathfrak{h}=\mathfrak{b}$ (cf. the proof of Theorem~\ref{expVNL}).
Evidently $\eta\in\mathfrak{b}$ and so there are $\eta'\in\mathfrak{n}_1\cap \mathfrak{n}_2$ and $\eta''\in\mathfrak{h}$ such that $\eta=\eta'+\eta''$. Then  $\eta'\in \mathfrak{n}_1$ and also $\eta''\notin \mathfrak{n}_2$ because otherwise $\eta\in\mathfrak{n}_2$. Since $\eta''$ is in $\mathfrak{h}$ but not in $\mathfrak{n}_2\cap \mathfrak{h}$, we can take $\mathfrak{v}_2$ containing $\eta''$. The claim is proved.

Note that $\mathfrak{b}/\mathfrak{n}_2$ is nilpotent. It is not hard to see (see the proof of Proposition~4.1 in \cite{ArAMN}) that there is an $\mathscr{F}$-basis (i.e., compatible with the lower central series) $(e_k)$ in $\mathfrak{b}/\mathfrak{n}_2$  containing the element $\eta+\mathfrak{n}_2$.
By Theorem~\ref{maindisto}, $\ell_j$, $j=1,2$, is equivalent to the function defined in~\eqref{thredecw} with $N'=N_j$. To use this equivalence we need the following asymptotic for a word length functions on $B/N_j$: it is equivalent to the function  $\max_k|t_k|^{1/w_k}$, where $(t_k)$ are the canonical coordinated of the first kind associated with the $\mathscr{F}$-basis and $w_k\in\mathbb{N}$, \cite[Theorem 3.1]{ArAMN}. Note that the quotient map $\mathfrak{b}\to\mathfrak{b}/\mathfrak{n}_2$ induces a bijection $\mathfrak{b}\to\mathfrak{v}_2$. Putting $j=2$ and taking $k$ such that $e_k=\eta+\mathfrak{n}_2$ we have from~\eqref{thredecw} that
$$
\ell_2(\exp (t \eta)) \simeq |t|^{1/w_k},$$
where $t\in\mathbb{C}$. On the other hand, $\eta\in\mathfrak{n}_1$ and so \eqref{thredecw} written for $j=1$ gives
$$
\ell_1(\exp (t \eta)) \simeq \log(1+|t\eta|),
$$
where $|\cdot|$ a norm on~$\mathfrak{n}$. Thus $\ell_2$  is not dominated by $\ell_1$.
\end{proof}

We finish with remarks.

\begin{rem}\label{nqifam}
If a length function $\ell$ is symmetric and separates points, then $d(g,h)=\ell(g^{-1}h)$ determines a left-invariant distance. Thus, under the hypotheses of Theorem~\ref{maindisto} we obtain the left-invariant distance associated with a maximal length function exponentially distorted on~$N'$. Hence there is a family of equivalence classes (up to quasi-isometry) of metric spaces parameterized by integral subgroups $N'$ such that $E\subset N'\subset N$. By Proposition~\ref{N1N2}, representatives of two different classes are not pairwise quasi-isometric. The family has two extreme cases: $N'=E$ (the biggest) and $N'=N$ (the smallest). The biggest class contains all the distances associated with word length functions on $G$  and also with left invariant Riemannian metrics; see, e.g., \cite[Proposition 5.5]{ArAMN}). So in the case when $E\ne N$  we have left-invariant structures of metric space  on $G$ that do not correspond to Riemannian metrics.

Of course, we always can replace a Riemannian distance by a non-Riemannian by composing it with a subadditive increasing continuous function with sufficiently low growth (see Lemma~\ref{3oper}(1)). But the distances in our family are more interesting because they satisfies the maximality property.
\end{rem}

\begin{rem}\label{nonscreal}
It follows from Lemma~\ref{closc} that the subgroup~$N'$ in Theorem~\ref{maindisto} is simply connected and nilpotent. Now we give a brief description of construction of subgroups of the nilpotent radical  that are not simply connected but nevertheless admit an exponentially distorted length function.

Let $H$ be the Heinsenberg group as in Example~\ref{Heisex}. Recall that the nilpotent radical $N$ coincides with the centre and is isomorphic to~$\mathbb{C}$. Fix a copy of $\mathbb{Z}$ in $\mathbb{C}$ and put $G=H/\mathbb{Z}$. We claim that $G$  admits  a length function exponentially distorted on the subgroup $\mathbb{C}^\times=N/\mathbb{Z}$, which coincides with the nilpotent radical of~$G$.

It is well known that $G$ is not linear and so we cannot apply to it our results on complex Lie groups. But the notion of an exponentially distorted length function does not rely to Lie group structure. In particular, we can treat $G$ as a real Lie group.

Since $\mathbb{C}^\times\cong\mathbb{R}\times \mathbb{T}$, we can consider the quotient $G/\mathbb{T}$, which is $\mathbb{R}$-linear being nilpotent and simply connected \cite[p.\.595, Proposition 16.2.6]{HiNe}. By a version of Proposition~\ref{omegpr0} for real Lie groups, there exists a length function exponentially distorted on the copy of $\mathbb{R}$ contained in $\mathbb{C}^\times$. (The proof of Proposition~\ref{omegpr0} based on results in \cite{ArPiLie}, which hold not only for $\mathbb{C}$ but also for $\mathbb{R}$. We even do need the real version of \cite[Theorem 2.2]{ArAnF} stated in~\cite{Ho} because use here a finite-dimensional representation.) Therefore this length function is (not strictly) exponentially distorted on~$\mathbb{C}^\times$.

In fact, this construction works for an arbitrary non-linear connected Stein group~$G$. Recall that the \emph{linearizer} of a complex Lie group is the intersection of the kernels of all holomorphic representations. So it is contained in the nilpotent radical.  By \cite[Corollary~2]{ArLine}, the linearizer contains a copy of $\mathbb{C}^\times$ as a direct factor. Consider the case when the linearizer is isomorphic to $\mathbb{C}^\times$. For example, this holds for the Heisenberg group. (It is worth mentioning that it can be easy to deduce from \cite[Proposition 5.1]{Ar20+} that any distorted copy of $\mathbb{C}^\times$ is contained in the linearizer.) Then there is a length function exponentially distorted on it. It holds by the same argument  as above with the only difference that we need to show that $G/\mathbb{T}$ is $\mathbb{R}$-linear. This can be done as in \cite[Theorem~1]{ArLine} using the fact that $G/\mathbb{T}$ is an extension of an $\mathbb{R}$-linear group by $\mathbb{R}$. The general case can be reduced to  this by factorization.
\end{rem}

\end{document}